\newtheorem{proposition}{Proposition}
\newtheorem{theorem}[proposition]{Theorem}
\newtheorem{lemma}[proposition]{Lemma}
\theoremstyle{remark}
\theoremstyle{definition}
\numberwithin{equation}{section}
\numberwithin{proposition}{section}
\newcommand{\Z}{\mathbb{Z}}
\newcommand{\N}{\mathbb{N}}
\newcommand{\R}{\mathbb{R}}
\renewcommand{\le}{\leqslant}
\renewcommand{\ge}{\geqslant}
\renewcommand{\leq}{\leqslant}
\renewcommand{\geq}{\geqslant}
\newcommand{\E}{\mathbb{E}}
\renewcommand{\P}{\mathbb{P}}
\newcommand{\F}{\mathcal{F}}
\newcommand{\Zd}{\mathbb{Z}^d}
\newcommand{\Rd}{{\mathbb{R}^d}}
\newcommand{\eps}{\varepsilon}
\renewcommand{\subset}{\subseteq}
\newcommand{\Ll}{\left}
\newcommand{\Rr}{\right}
\DeclareMathOperator{\dist}{dist}
\renewcommand{\bar}{\overline}
\renewcommand{\tilde}{\widetilde}
\renewcommand{\hat}{\widehat}
\newcommand{\un}{\underline}
\newcommand{\1}{\mathbf{1}}
\newcommand{\dr}{\partial}
\renewcommand{\a}{\mathbf{a}}
\newcommand{\ahom}{{\overbracket[1pt][-1pt]{\a}}}
\newcommand{\g}{\mathbf{g}}
\renewcommand{\S}{\mathbf{S}}
\renewcommand{\O}{\mathcal{O}}
\newcommand{\X}{\mathcal{X}}
\newcommand{\phil}{\phi^{(\lambda)}_{e_k}}
\newcommand{\Sl}{\mathbf{S}^{(\lambda)}}
\begin{document}

\title[Iterative method for rapidly oscillating coefficients]{An iterative method for elliptic problems with rapidly oscillating coefficients}

\begin{abstract}
We introduce a new iterative method for computing solutions of elliptic equations with random rapidly oscillating coefficients. Similarly to a multigrid method, each step of the iteration involves different computations meant to address different length scales. However, we use here the homogenized equation on all scales larger than a fixed multiple of the scale of oscillation of the coefficients. While the performance of standard multigrid methods degrades rapidly under the regime of large scale separation that we consider here, we show an explicit estimate on the contraction factor of our method which is independent of the size of the domain. We also present numerical experiments which confirm the effectiveness of the method, with openly available source code.
\end{abstract}

\author[S. Armstrong]{S. Armstrong}
\address[S. N. Armstrong]{Courant Institute of Mathematical Sciences, New York University, USA}
\email{scotta@cims.nyu.edu}

\author[A. Hannukainen]{A. Hannukainen}
\address[A. Hannukainen]{Department of Mathematics and Systems Analysis, Aalto University, Finland}
 \email{antti.hannukainen@aalto.fi}

\author[T. Kuusi]{T. Kuusi}
\address[T. Kuusi]{Department of Mathematics and Statistics, University of Helsinki, Finland}
 \email{tuomo.kuusi@helsinki.fi}

\author[J.-C. Mourrat]{J.-C. Mourrat}
\address[J.-C. Mourrat]{DMA, Ecole normale sup\'erieure,
CNRS, PSL Research University, Paris, France}
\email{mourrat@dma.ens.fr}

\keywords{}
\subjclass[2010]{65N55, 35B27}
\date{\today}

\maketitle

%
%
%
%
%
%

\section{Introduction}

\subsection{Informal summary of results}

In this paper, we introduce a new iterative method for the numerical approximation of solutions of elliptic problems with rapidly oscillating coefficients. For definiteness, we consider the Dirichlet problem 
\begin{equation}  
\label{e.model.problem}
\left\{ 
\begin{aligned}
& -\nabla \cdot \Ll( \a(x) \nabla u \Rr) = f  & \mbox{in } &  U_r,\\
& u = w & \mbox{on } &  \partial U_r,
\end{aligned}
\right.
\end{equation}
where $r>0$ is the length scale of the problem, which is typically very large ($r\gg1$), and we write $U_r := r U$ where $U \subset \Rd$ is a bounded $C^{1,1}$ domain, in dimension $d\geq 2$.
The boundary condition $w$ belongs to $H^1(U_r)$, and the right-hand side $f$ belongs to $H^{-1}(U_r)$. The coefficients $\a(x)$ are symmetric, uniformly elliptic and H\"older continuous. Moreover, in order to ensure that \emph{quantitative homogenization} holds on large scales, we assume that the coefficients are sampled by a probability measure which is $\Zd$-stationary and has a unit range of dependence (see below for the precise formulation of these assumptions).
Our goal is to build a numerical method for the computation of $u$ which remains efficient in the regime of fast oscillations of the coefficient field  (which in our setting corresponds to the case in which the length scale is very large, $r \gg 1$) and does not rely on scale separation for convergence (the method computes the true solution for fixed~$r$ and not only in the limit $r\to \infty$).

\smallskip

In the absence of fast oscillations of the coefficient field, contemporary technology allows to access numerical approximations of elliptic problems involving billions of degrees of freedom. One of the most successful methods allowing to achieve such results is the \emph{multigrid method} (see \cite{FFT} for benchmarks). However, the performance of this method degrades as the coefficient field becomes more rapidly oscillating (see for instance \cite[Table~IV]{sundar2012parallel}).

\smallskip

We seek to remedy this problem by leveraging on \emph{homogenization}. While standard multigrid methods use a decomposition of the elliptic problem into a series of scales, the difficulty in our context is that the slow eigenmodes of the heterogeneous operator still have fast oscillations, and are thus not easily captured through a coarse representation. We overcome this by introducing a suitable variant of the multigrid method that succeeds in replacing the heterogeneous operator by the homogenized one 
on length scales larger than a large but finite multiple of the correlation length scale. The result is a new iterative method that converges exponentially fast in the number of iterations, each of which is relatively inexpensive to compute---the memory and number of computations required scale linearly in the volume, and the computation is very amenable to parallelization. We give a rigorous proof of convergence and present numerical experiments which establish the efficiency of the method from a practical point of view.  

\subsection{Statement of the main result}

We introduce some notation in order to state our main result. We begin with the precise assumptions on the coefficient field. 
We fix parameters~$\Lambda > 1$ and~$\alpha \in (0,1]$ and require our coefficient fields $\a(x)$ to satisfy
\begin{equation}
\label{e.holder}
\forall x,y \in \Rd, \quad |\a(y) - \a(x)| \le \Lambda|x-y|^\alpha
\end{equation}
and
\begin{equation}  
\label{e.unif.ell}
\forall x \in \Rd, \quad \forall \xi \in \Rd, \quad \Lambda^{-1} |\xi|^2 \le \xi \cdot \a(x) \xi \le \Lambda |\xi|^2.
\end{equation}
We denote by~$\R^{d\times d}_{\mathrm{sym}}$ the set of~$d$-by-$d$ real symmetric matrices and define
\begin{equation*}  
\Omega := \Ll\{ \a : \R^d \to \R^{d \times d}_\mathrm{sym} \ \mbox{satisfying \eqref{e.holder} and \eqref{e.unif.ell}} \Rr\}.
\end{equation*}
For each Borel set $V \subset \Rd$, we denote by $\F_V$ the Borel $\sigma$-algebra on $\Omega$ generated by the family of mappings
\begin{equation*}  
\a \mapsto \int_\Rd \chi \, \a_{ij} , \qquad i,j \in \{1,\ldots,d\}, \ \chi \in C^\infty_c(V).
\end{equation*}
We also set $\F := \F_{\Rd}$.
For each $y \in \Rd$, we denote by $T_y:\Omega\to\Omega$ the action of translation by~$y$:
\begin{equation*}  
\forall x \in \Rd, \quad T_y \a(x) := \a(x+y).
\end{equation*}
We assume that~$\P$ is a probability measure on $(\Omega, \F)$ satisfying:
\begin{itemize}  
\item stationarity with respect to $\Zd$-translations: for every $y \in \Zd$ and $A \in \F$,
\begin{equation*}  
\P \Ll[ T_y A \Rr] = \P \Ll[ A \Rr] ;
\end{equation*}
\item unit range of dependence: for every Borel sets $V,W \subset \Rd$,
\begin{equation*}  
\dist(V,W) \ge 1 \implies \F_{V} \text{ and } \F_{W} \mbox{ are $\P$-independent.}
\end{equation*}
\end{itemize}
The expectation associated with the probability measure $\P$ is denoted by $\E$. We recall that, by classical homogenization theory (see~\cite{PV1,AKMbook}), the heterogeneous operator $-\nabla \cdot \a(x) \nabla$ homogenizes to the homogeneous operator $-\nabla \cdot \ahom \nabla$, where $\ahom \in \R^{d\times d}$ is a deterministic, constant, positive definite matrix. For every $s, \theta > 0$ and random variable $X$, we write
\begin{equation}
\label{e.def.Os}
X \le \O_s \Ll( \theta \Rr) \quad \text{if and only if} \quad  \E \Ll[  \exp \Ll( \Ll( \theta^{-1} \max(X,0) \Rr)^s \Rr)\Rr]  \le 2.
\end{equation}
We also set, for every $\lambda \in (0,1]$,
\begin{equation}
\label{e.def.llambda}
\ell(\lambda) := 
\Ll\{
\begin{aligned}
& \Ll( \log (1+\lambda^{-1}) \Rr)^\frac 1 2  & \quad \text{if } d = 2, \\
& 1 & \quad \text{if } d \ge 3.
\end{aligned}
\Rr.
\end{equation}
For notational convenience, from now on we will suppress the explicit dependence on the spatial variable in the operator $-\nabla \cdot \a(x) \nabla$ and simply write $-\nabla \cdot \a \nabla$. 
\smallskip

We now state the main result of the paper. We recall that $\P$ is a probability measure on $(\Omega,\F)$ which specifies the law of the coefficient field~$\a(x)$ and satisfies the assumptions stated above, that~$\ahom$ is the homogenized matrix associated to~$\P$, and that $U\subseteq \Rd$ is a bounded domain with $C^{1,1}$ boundary.

\begin{theorem}[$H^1$ contraction]
\label{t.contract}
For each~$s \in (0,2)$, there exists a constant $C(s,U,\Lambda,\alpha,d) < \infty$ such that the following statement holds.
Fix $r \geq 1$, $\lambda \in \left[r^{-1},1\right]$, $f\in H^{-1}(U_r)$, $w\in H^1(U_r)$ and let $u \in w+H^1_0(U_r)$ be the solution of~\eqref{e.model.problem}. Also fix a function~$v \in w + H^1_0(U_r)$ and define the functions $u_0, \bar u, \tilde u \in H^1_0(U_r)$ to be the solutions of the following equations (with null Dirichlet boundary condition on~$\partial U_r$):
\begin{align*}  
(\lambda^2 - \nabla \cdot \a \nabla)u_0 & 
= f +  \nabla \cdot \a \nabla v &  \text{in } U_r,
\\
- \nabla \cdot \ahom \nabla\bar u & = \lambda^2 u_0 &  \text{in } U_r, 
\\
(\lambda^2 - \nabla \cdot \a \nabla)\tilde u & = (\lambda^2-\nabla \cdot \ahom \nabla )\bar u&  \text{in } U_r.
\end{align*}
For~$\hat v \in  w+H^1_0(U_r)$ defined by
\begin{equation}
\label{e.def.hatv}
\hat v := v + u_0  + \tilde u,
\end{equation}
we have the estimate
\begin{equation}  
\label{e.contract}
\left\| \nabla ( \hat v-u ) \right\|_{L^2(U_r)} \le \O_s \Ll(C \ell(\lambda)^\frac 1 2 \, \lambda^{\frac 1 2} \, \left\| \nabla ( v-u )  \right\|_{L^2(U_r)} \Rr).
\end{equation}
\end{theorem}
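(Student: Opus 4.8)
The plan is to reduce \eqref{e.contract} to a single homogenization error bound for the Dirichlet problem, by means of a short algebraic identity together with a smoothing property of the screened operator $\lambda^2 - \nabla\cdot\a\nabla$. Write $e := u - v \in H^1_0(U_r)$, and let $S_\lambda$ denote the map sending $h \in L^2(U_r)$ to the solution in $H^1_0(U_r)$ of $(\lambda^2 - \nabla\cdot\a\nabla)(\cdot) = \lambda^2 h$. Since $f + \nabla\cdot\a\nabla v = -\nabla\cdot\a\nabla e$, one checks directly that $g := e - u_0 = S_\lambda e$ and that $\tilde u = S_\lambda(\bar u + u_0)$; adding these and using $u_0 - e = -g$ gives the identity
\[
\hat v - u \;=\; u_0 + \tilde u - e \;=\; S_\lambda(\bar u - g).
\]
The useful feature of this identity is that, by the equations for $u_0$, $g$ and $\bar u$, one has $-\nabla\cdot\a\nabla g = \lambda^2 u_0 = -\nabla\cdot\ahom\nabla\bar u$ in $U_r$ with $g,\bar u \in H^1_0(U_r)$: thus $g$ and $\bar u$ are respectively the heterogeneous and the homogenized solution of the \emph{same} Dirichlet problem, and $\bar u - g$ is exactly a homogenization error.

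Next I would record the (deterministic) a priori bounds. Testing the equation for $u_0$ against $u_0$ yields $\|\nabla u_0\|_{L^2(U_r)} \le C\|\nabla e\|_{L^2(U_r)}$ and $\lambda\|u_0\|_{L^2(U_r)} \le C\|\nabla e\|_{L^2(U_r)}$; hence $\|\nabla g\|_{L^2(U_r)} \le C\|\nabla e\|_{L^2(U_r)}$, and comparing the Dirichlet energies of $g$ and $\bar u$ gives $\|\nabla\bar u\|_{L^2(U_r)} \le C\|\nabla e\|_{L^2(U_r)}$. Since $\ahom$ is constant and the global $H^2$ estimate on the $C^{1,1}$ domain $U_r = rU$ is scale invariant, $\|\nabla^2\bar u\|_{L^2(U_r)} \le C\lambda^2\|u_0\|_{L^2(U_r)} \le C\lambda\|\nabla e\|_{L^2(U_r)}$. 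I would then prove the smoothing inequality
\[
\|\nabla S_\lambda h\|_{L^2(U_r)} \;\le\; C\lambda^{\frac12}\,\|h\|_{L^2(U_r)}^{\frac12}\,\|\nabla h\|_{L^2(U_r)}^{\frac12}, \qquad h \in H^1_0(U_r),
\]
which follows because, $\a$ being symmetric, $-\nabla\cdot\a\nabla$ is self-adjoint on $L^2(U_r)$ with discrete spectrum $\{\mu_k\}$; expanding $h$ in the eigenbasis and using the elementary bound $\lambda^4\mu\,(\lambda^2+\mu)^{-2} \le \lambda\mu^{\frac12}$ together with the Cauchy--Schwarz inequality gives the claim. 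Applied to $h = \bar u - g$ (and using $\|\nabla(\bar u - g)\|_{L^2(U_r)} \le C\|\nabla e\|_{L^2(U_r)}$ from the bounds above), this reduces the theorem to the homogenization estimate
\[
\|\bar u - g\|_{L^2(U_r)} \;\le\; \O_{s/2}\!\Ll( C\,\ell(\lambda)\,\|\nabla e\|_{L^2(U_r)} \Rr).
\]
The two exponents $\tfrac12$ in the smoothing inequality are precisely what produce both the power $\lambda^{\frac12}$ and the square root on $\ell(\lambda)$ in \eqref{e.contract}; they also convert $\O_{s/2}$ into $\O_s$, which is why the hypothesis $s < 2$ suffices.

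The remaining estimate on $\bar u - g$ is the main work. I would prove it through a two-scale expansion of $g$ around $\bar u$ built from the \emph{massive} first-order corrector $\phi^{(\lambda)}_{e_k}$ and the associated flux corrector $\mathbf S^{(\lambda)}_{e_k}$ of $\lambda^2 - \nabla\cdot\a\nabla$, which are stationary and satisfy $\|\phi^{(\lambda)}_{e_k}\| + \|\mathbf S^{(\lambda)}_{e_k}\| \le \O_{s/2}(C\ell(\lambda))$ (this is where $\ell(\lambda)$, and in particular the logarithm in $d=2$, enter). Writing $\bar u - g = -\chi\,\phi^{(\lambda)}_{e_k}\partial_k\bar u - \rho$ with $\chi$ a cutoff vanishing in a boundary layer of $\partial U_r$, the corrector term contributes $\le \O_{s/2}(C\ell(\lambda))\|\nabla\bar u\|_{L^2(U_r)}$, while $\rho \in H^1_0(U_r)$ solves an equation whose right-hand side is built from $\mathbf S^{(\lambda)}_{e_k}\otimes\nabla^2\bar u$, a zeroth-order term of the form $\lambda^2\phi^{(\lambda)}_{e_k}\partial_k\bar u$ coming from the mass in the corrector equation, and a term supported where $\nabla\chi \ne 0$. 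Using $\|\nabla^2\bar u\|_{L^2(U_r)} \le C\lambda\|\nabla e\|_{L^2(U_r)}$, $\|\nabla\bar u\|_{L^2(U_r)} \le C\|\nabla e\|_{L^2(U_r)}$, the corrector bounds, and a trace estimate for the boundary-layer term (optimizing its width, which turns out to be comparable to $\ell(\lambda)/\lambda$), one bounds $\|\rho\|_{L^2(U_r)}$, and hence $\|\bar u - g\|_{L^2(U_r)}$, by $\O_{s/2}(C\ell(\lambda)\|\nabla e\|_{L^2(U_r)})$. The crux — and the main obstacle — is to carry this out with constants independent of the size of $U_r$, that is, without paying powers of the a priori unbounded ratio $\lambda r \ge 1$: in particular the zeroth-order term $\lambda^2\phi^{(\lambda)}_{e_k}\partial_k\bar u$ must be estimated in $H^{-1}(U_r)$ using that $\phi^{(\lambda)}$ decorrelates on scale $\lambda^{-1}$ (rather than via a Poincar\'e inequality, which would cost a factor $r$), so that it is the screening length $\lambda^{-1}$, not the macroscopic scale $r$, that governs the error. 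This is exactly the range of scales controlled by the quantitative homogenization theory for the massive equation, and I would expect to invoke (a form of) it as developed earlier in the paper. Combining this estimate with the smoothing inequality, via the $\O$-calculus, then yields \eqref{e.contract}.
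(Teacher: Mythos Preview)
Your algebraic reduction is correct and elegant: the identity $\hat v - u = S_\lambda(\bar u - g)$ with $g = u - v - u_0$, together with the spectral smoothing inequality $\|\nabla S_\lambda h\|_{L^2}^2 \le C\lambda\|h\|_{L^2}\|\nabla h\|_{L^2}$, cleanly isolates the $\lambda^{1/2}$ and reduces the theorem to the $L^2$ homogenization bound
\[
\|\bar u - g\|_{L^2(U_r)} \le \O_{s/2}\!\left(C\ell(\lambda)\|\nabla e\|_{L^2(U_r)}\right).
\]
This is a genuinely different route from the paper, which never passes through an $L^2$ homogenization error: it instead applies its two-scale expansion estimate (Theorem~\ref{t.twoscale}) twice in $H^1$, once with $\mu=0$ to the pair $(u-v-u_0,\bar u)$ and once with $\mu=\lambda$ to $(\tilde u,\bar u)$, and the factor $\lambda^{1/2}$ emerges from the boundary-layer trace term $\ell(\lambda)^{1/2}\|\bar u\|_{H^1}^{1/2}\|\bar u\|_{H^2}^{1/2}$ combined with $\|\bar u\|_{H^2}\le C\lambda\|\nabla e\|$.

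The gap in your proposal is precisely the $L^2$ estimate you reduce to. In your decomposition $\bar u - g = -\chi\phi^{(\lambda)}_{e_k}\partial_k\bar u - \rho$, the first piece is indeed $\O_s(\ell(\lambda))\|\nabla\bar u\|_{L^2}$, but for the remainder $\rho\in H^1_0(U_r)$ your sketch only controls the right-hand side of its equation in $H^{-1}(U_r)$; this yields a bound on $\|\nabla\rho\|_{L^2}$, not on $\|\rho\|_{L^2}$. Converting via Poincar\'e costs exactly the factor $r$ you are trying to avoid, and your remark about estimating the zeroth-order term $\lambda^2\phi^{(\lambda)}_{e_k}\partial_k\bar u$ in $H^{-1}$ using decorrelation addresses the wrong issue (it helps with $\|\nabla\rho\|_{L^2}$, not $\|\rho\|_{L^2}$). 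Concretely, the interior part of the two-scale residual has $L^2$ size $\O_s(C\ell(\lambda)\|\nabla^2\bar u\|_{L^2})\le \O_s(C\ell(\lambda)\lambda\|\nabla e\|_{L^2})$, so energy plus Poincar\'e gives only $\|\rho\|_{L^2}\le \O_s(Cr\lambda\,\ell(\lambda)\|\nabla e\|_{L^2})$, which for $\lambda r\gg 1$ is far from what you need. An $L^2$ bound uniform in $r$ of the strength you claim requires an additional idea (e.g.\ an Aubin--Nitsche duality argument using the two-scale expansion a second time on the dual problem), which is absent from your outline. The paper's approach sidesteps this entirely by staying in $H^1$.
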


The function $u \in H^1(U_r)$ appearing in Theorem~\ref{t.contract} is the unknown we wish to approximate, and $v \in H^1(U_r)$ should be thought of as the current approximation to~$u$. The function~$\hat{v}$ is then the new, updated approximation to~$u$ and the estimate~\eqref{e.contract} says that, if $\lambda$ is chosen small enough, then the error in our approximation will be reduced by a multiplicative factor of~$1/2$. 
As explained more precisely around \eqref{e.unif.contract} below, we can then iterate this procedure and obtain rapid convergence to the solution. The only assumption we make on~$v$ is that it satisfies the correct boundary condition, that is, $v \in w + H^1_0(U_r)$. In particular, we may  begin the iteration with~$v = w$ as the initial guess (or any other function with the correct boundary condition).
The computation of $\hat v$ reduces to solving the problems for $u_0, \bar u$, and $\tilde u$ listed in the  statement, and the point is that each of these problems is relatively inexpensive to compute, provided that~$\lambda$ is not too small. 
A fundamental aspect of the result is therefore that the required smallness of the parameter~$\lambda$ (so that~\eqref{e.contract} gives us a strict contraction in~$H^1$) does not depend on the length scale~$r$ of the problem. In other words, we may need to take~$\lambda$ to be small, but it will still be of order one, no matter how large~$r$ is. 

\smallskip

Similarly to standard multigrid methods, the equation for $u_0$ is meant to resolve the small-scale discrepancies between $u$ and $v$. Note that the equation for $u_0$ can be rewritten as
\begin{equation*}  
(\lambda^2 - \nabla \cdot \a \nabla)u_0  = - \nabla \cdot \a \nabla (u-v) \qquad \text{in } U_r.
\end{equation*}
The parameter~$\lambda^{-1}$ is the characteristic length scale of this problem, and in practice we will take it to be some fixed multiple of the scale of oscillations of the coefficients. The computation of $u_0$ can thus be decomposed into a large number of essentially unrelated elliptic problems posed on subdomains of side length of the order of $\lambda^{-1}$. In analogy with multigrid methods, we may also think of~$\lambda^{-2}$ as the number of elementary pre-smoothing steps performed during one global iteration. 

\smallskip

As announced, we then use the homogenized operator on scales larger than~$\lambda^{-1}$. This is what the problem for $\bar u$ is meant to capture. Since the elliptic problem for~$\bar u$ involves the homogenized operator $-\nabla \cdot \ahom \nabla$, it can be solved efficiently using the standard multigrid method. We note that the equation for~$\bar u$ can be rewritten, if desired, in the form
\begin{equation}  
\label{e.alt.eq.baru}
- \nabla \cdot \ahom \nabla\bar u  = -\nabla \cdot \a \nabla(u- v-u_0) \qquad  \text{in } U_r.
\end{equation}

\smallskip

The final step of the iteration, involving the definition of $\tilde u$, is meant to add back some small-scale details to the function $\bar u$. It is analogous to the post-smoothing step in the standard $V$-cycle implementation of the multrigrid method, and the parameter $\lambda^{-2}$ represents the number of post-smoothing steps.

\smallskip

We next discuss the more probabilistic aspects involved in the statement of Theorem~\ref{t.contract}. Since the coefficient field is random, the statement of this theorem can only be valid with high probability, but not almost surely. Indeed, with non-zero probability, the coefficient field can be essentially arbitrary, and on such small-probability events, the idea of leveraging on homogenization can only perform badly (recall that we aim for a convergence result for large but~fixed $r$, as opposed to asymptotic convergence). It may help the intuition to observe that, by Chebyshev's inequality, the assumption of \eqref{e.def.Os} implies that
\begin{equation}  
\label{e.tail}
\forall x \ge 0, \quad \P \Ll[ X \ge \theta x \Rr] \le 2 \exp(-x^s),
\end{equation}
and that conversely, the assumption of \eqref{e.tail} implies that $X \le \O_s(C \theta)$ for some constant $C(s) < \infty$ (see \cite[Lemma~A.1]{AKMbook}).

\smallskip

We remark that Theorem~\ref{t.contract} is new even when restricted to the subclass of periodic coefficient fields. In this case, both the probabilistic part of the estimate as well as the  logarithmic factor of $\ell(\lambda)$ are not present, and~\eqref{e.contract} can be replaced with the simpler form
\begin{equation*}  
\|\nabla(\hat v-u)\|_{L^2(U_r)} \le C 
\lambda^{\frac 1 2} \, \|\nabla(v - u)\|_{L^2(U_r)}.
\end{equation*}

\smallskip

We stress that the probabilistic statement in \eqref{e.contract} is valid for each fixed choice of~$u,v \in H^1(U_r)$. In fact, further work inspired by the first version of the present paper allowed to obtain the following stronger, uniform estimate \cite{chenlin}. For each $s \in (0,2)$, there exist a constant $C(s,p,U,\Lambda,d) < \infty$ and, for each $r \geq 1$ and $\lambda \in [r^{-1},1]$, a random variable $\X_{s,r,\lambda} : \Omega \to [0,+\infty]$  satisfying 
\begin{equation*}  
\X_{s,r,\lambda} \le \O_s \Ll( C\Rr) 
\end{equation*}
such that, for every $u,v \in H^1(U_r)$ and $\hat v$ as in the statement of Theorem~\ref{t.contract}, 
\begin{equation}  
\label{e.unif.contract}
\left\| \nabla (\hat v - u) \right\|_{ L^2(U_r)} 
\leq 
\X_{s,r,\lambda} \,
\ell(\lambda)^{\frac12} \,
\lambda^\frac 1 2 \, 
(\log r)^\frac 1 s \, 
\left\|\nabla (v-u)\right\|_{ L^2(U_r)}.
\end{equation}
Moreover, the proof given in \cite{chenlin} does not require that the coefficient field be H\"older continuous. As is apparent in \eqref{e.unif.contract}, the price one has to pay for the uniformity of this estimate in the functions $u$ and $v$ is a slight degradation of the contraction factor, by a slowly diverging logarithmic factor of the domain size. Due to  randomness, uniform estimates such as \eqref{e.unif.contract} must necessarily contain some logarithmic divergence in the domain size. Indeed, consider for instance the case of a coefficient field given by a random checkerboard in which we toss a fair coin, independently for each $z \in \Z^d$, the coefficient field in $z + [0,1)^d$ to be either $I_d$ or $2I_d$. Then, with probability tending to one as $r$ tends to infinity, there will be in the domain $U_r$ a region of space of side length of the order of $(\log r)^\frac 1 d$ where the coefficient field is constant equal to $I_d$. If the support of the solution we seek is concentrated in this region, then the iteration described in Theorem~\ref{t.contract} will perform badly unless $\lambda^{-1}$ is chosen larger than $(\log r)^\frac 1 d$. 

\smallskip

The iteration proposed in Theorem~\ref{t.contract} requires the user to make a judicious choice of the length scale~$\lambda^{-1}$. Ideally, it would be preferable to devise an adaptive method which discovers a good choice for~$\lambda^{-1}$ automatically. The contraction of the iteration would then be guaranteed with probability one, and more subtle probabilistic quantifiers would instead enter into the complexity analysis of the method.
A suitably designed adaptive algorithm would likely 
also work on more general coefficient fields than those considered here, allowing for instance to drop the assumption of stationarity. An assumption of approximate local stationarity would then also enter into the complexity analysis of the method. We leave the  development of such adaptive methods to future work. 

\smallskip

The method proposed here also requires that the user computes~$\ahom$ beforehand. An efficient method for doing so was presented in \cite{efficient,ahom}; see also~\cite{Gloria, cemracs} and references therein for previous work on this problem. Moreover, one can check that in order to guarantee the contraction property of the iteration described in Theorem~\ref{t.contract}, say by a factor of~$1/2$, a coarse approximation of~$\ahom$, which may be off by a small but fixed positive amount, suffices.

\smallskip

The proof of Theorem~\ref{t.contract} can be modified so that the~$L^2$ norms in~\eqref{e.contract} are replaced by $L^p$ norms, for any exponent~$p<\infty$. Up to some additional logarithmic factors in~$\lambda$, the contraction factor in the estimate would then be of order~$\lambda^{\frac 1p}$ rather than $\lambda^{\frac 12}$. This modification requires the application of large-scale Calder\'on-Zygmund-type~$L^p$ estimates which can be found in~\cite[Chapter 7]{AKMbook}. The main required modification to the proof of Theorem~\ref{t.contract} is simply to upgrade the two-scale expansion result of Theorem~\ref{t.twoscale} from $p=2$ to larger exponents by adapting the argument of~\cite[Theorem 7.10]{AKMbook}.

\subsection{Previous works}

There has been a lot of work on numerical algorithms that become sharp only in the limit of infinite scale separation (see for instance~ \cite{NJW, MS, brandt, eq-free,eh-book,E-book,  hmm} and the references therein). That is, the error between the true solution~$u$ and its numerical approximation becomes small only as~$r\to \infty$. Such algorithms typically have a computational complexity scaling sublinearly with the volume of the domain. An example of such a method in the context of the homogenization problem considered here is to compute an approximation of the solution to the homogenized equation. In addition to relying on scale separation, we note that such a sublinear method can only give an accurate global approximation in a weaker space such as~$L^2$, but not in stronger norms such as~$H^1$ which are sensitive to small scale oscillations. 

\smallskip

We now turn our attention to numerical algorithms that, like ours, converge to the true solution for each finite value of~$r$. 
As pointed out in \cite{knap1,knap2}, direct applications of standard multigrid methods result in coarse-scale systems that do not capture the relevant large-scale properties of the problem. Indeed, standard  coarsening procedures produce effective coefficients that are simple arithmetic averages of the original coefficient field, instead of the homogenized coefficients. To remedy this problem,  \cite{knap1,knap2} propose more subtle, matrix-dependent choices for the restriction and prolongation operators. The idea is to try to approximate a Schur complement calculation, while preserving some calculability constraints such as matrix sparsity. The method proposed there is shown numerically to perform better than simple averaging, but no theoretical guarantee is provided.

\smallskip

In \cite{EL1,EL2}, the authors propose, in the periodic setting, to solve local problems for the correctors, deduce locally homogenized coefficients, and build coarsened operators from these. For the special two-dimensional case with $\a(x) = \tilde \a(x_1 - x_2)$ for some $1$-periodic $\tilde \a \in C([0,1];\R^{2\times 2}_{\mathrm{sym}})$, they show (in our notation) that $O(r^{\frac 5 3} \log r)$ smoothing steps suffice to guarantee the contractivity of the two-step multigrid method (assuming that the chosen coarsening scale is a bounded multiple of the oscillation scale). For comparison, this roughly corresponds to the choice of $\lambda \simeq r^{-\frac 5 6}$ in our method. They also report better numerical performance than predicted by their theoretical arguments.

\smallskip

Beyond our current assumption of stationarity of the coefficient field, one can look for numerical methods for the resolution of general elliptic problems with rapidly oscillating coefficients. Possibly the simplest such method is to rely on the uniform ellipticity assumption~\eqref{e.unif.ell} and appeal to a preconditioned conjugate gradient method, using the standard Laplacian as a preconditioner. However, the norm that is contracted at each iteration of this algorithm is the~$L^2$ norm, as opposed to a contraction of the $H^1$ norm as obtained in the present paper\footnote{Naturally, the $L^2$ and $H^1$ norms become equivalent after discretization; but a theoretical guarantee of contraction in $H^1$ ensures that the high frequencies can be resolved efficiently after only a few steps of the iteration, irrespectively of the size of the mesh refinement.}. Moreover, the performance of this method degrades quickly if the ellipticity ratio $\Lambda$ becomes large. In contrast, using some of the results and techniques of \cite{AD,D1}, generalizations of Theorem~\ref{t.contract} have now been obtained in the highly degenerate case of perforated media of percolation type, for which  $\Lambda = \infty$; see \cite{chenlin2}.

\smallskip

Algebraic multigrid methods are intended to solve completely arbitrary linear systems of equations, by automatically discovering a hierarchy of coarsened problems~\cite{amg}. In practice, it is however necessary to make some judicious choices of coarsening operators. In a sense, the present contribution as well as those of \cite{knap1,knap2,EL1,EL2} are descriptions of specific coarsening procedures which, under stronger assumptions such as stationarity, are shown to have fast convergence properties.

\smallskip

Many alternative approaches to the computation of elliptic problems with arbitrary coefficient fields have been developed. We mention in particular, without going into details, hierarchical matrices \cite{bebendorf}, generalized multiscale finite element methods \cite{BL,EGH,GGS}, polyharmonic splines \cite{OZB}, local orthogonal decompositions \cite{malpet}, subspace correction methods \cite{KY} and gamblets \cite{roulette}. While methods such as gamblets have been shown theoretically to have essentially linear complexity under weaker assumptions than those explored in the present paper, the construction and storage of the hierarchy of gamblets may actually be quite expensive in practice (we are not aware of large-scale computations that use gamblets; the main numerical example in~\cite{roulette} has $2^{24} \simeq 1.7 \cdot 10^7$ degrees of freedom). Methods such as local orthogonal decompositions introduce an intermediate scale, often denoted by~$H$, inbetween the microscopic and the macroscopic scales, and an adapted basis of local functions is computed at this level. In this framework, the numerical error is bounded from below by a multiple of $H$. The method presented in Theorem~\ref{t.contract} shares some aspects of this idea in that it also introduces an intermediate scale $\lambda^{-1}$; however, the final numerical error is not constrained by this choice, and can be made arbitrarily low irrespectively of the value of~$\lambda$.

\smallskip

The very recent work \cite{fgp} probably comes closest to the goals of the present work. Under assumptions similar to ours, they analyze the performance of the method of local orthogonal decompositions (LOD). We believe that the method presented here has fundamental advantages over LOD. One aspect is that, as explained above, the error in LOD is bounded from below by a multiple of the intermediate scale (often denoted by $H$), while this is not so in our approach (in which the intermediate scale is $\lambda^{-1}$). Moreover, our result guarantees an approximation of the true solution in~$H^1$, while the results of \cite{fgp} only provide with $L^2$ estimates. Finally, our method is very easy to implement, as can be verified by the curious reader since the source code of our numerical tests is openly available, see \eqref{e.github}; on the other hand, the finite but possibly large overlaps of the local orthogonal frame might in practice cause significant computational overheads.

\smallskip

\subsection{Organization of the paper}
We introduce some more notation in Section~\ref{s.notation}. Section~\ref{s.proof} is devoted to the proof of Theorem~\ref{t.contract}. We report on our numerical results in Section~\ref{s.numerics}. Finally, an appendix recalls some classical Sobolev and elliptic estimates for the reader's convenience.

%
%
%
%
%
%

\section{Notation} 
\label{s.notation}

In this section, we collect some notation used throughout the paper. Recall that the notation~$\O_s(\cdot)$ was defined in \eqref{e.def.Os}. We will need the following fact, which says that~$\O_s$ is behaving like a norm: for each $s \in (0,\infty)$, there exists~$C_s < \infty$ (with $C_s = 1$ for $s \ge 1$) such that the following triangle inequality for $\O_s(\cdot)$ holds: for any measure space $(E,\mathcal{S},\mu)$, measurable function $\theta: E \to (0,\infty)$ and jointly measurable family $\left\{ X(z) \right\}_{z \in E}$ of random variables, we have (see \cite[Lemma~A.4]{AKMbook})
\begin{equation} 
\label{e.Osums}
\forall z\in E, \   X(z) \le \O_s(\theta(z))
\implies
\int_{E} X\,d\mu \leq \O_s\left( C_s \int_E \theta \,d\mu \right). 
\end{equation}
We denote by $(e_1,\ldots,e_d)$ the canonical basis of $\Rd$, and write $B(x,r) \subset \Rd$ for the Euclidean ball centered at $x \in \Rd$ and of radius $r > 0$. For a Borel set $V \subset \Rd$, we denote its Lebesgue measure by $|V|$. If $|V| < \infty$, then for every $p \in [1,\infty)$ and $f \in L^p(V)$ we write the scaled~$L^p$ norm of~$f$ by 
\begin{equation*}  
\|f\|_{\un L^p(V)} := \Ll( |V|^{-1} \int_V f^p \Rr)^{\frac 1 p} = |V|^{-\frac 1 p} \|f\|_{L^p(V)}.
\end{equation*}
For each $k \in \N$, we denote by $H^{k}(V)$ the classical Sobolev space on $V$, whose norm is given by
\begin{equation*}  
\|f\|_{H^{k}(V)} := \sum_{0 \le |\beta| \le k} \|\partial^\beta f\|_{L^2(V)}.
\end{equation*}
In the expression above, the parameter $\beta = (\beta_1, \ldots, \beta_d)$ is a multi-index in $\N^d$, and we used the notation
\begin{equation*}  
|\beta| := \sum_{i = 1}^d \beta_i \quad \text{ and } \quad \partial^\beta f = \partial_{x_1}^{\beta_1} \cdots \partial_{x_d}^{\beta_d} f.
\end{equation*}
Whenever $|V| < \infty$, we define the scaled Sobolev norm by
\begin{equation*}  
\|f\|_{\un H^{k}(V)} := \sum_{0 \le |\beta| \le k} |V|^{\frac{|\beta| - k}{d}}\|\partial^\beta f\|_{\un L^2(V)}.
\end{equation*}
We denote by $H^1_0(V)$ the completion in $H^1(V)$ of the space $C^\infty_c(V)$ of smooth functions with compact support in $V$.
We write $H^{-1}(V)$ for the dual space to $H^1_0(V)$, which we endow with the (scaled) norm
\begin{equation*}  
\|f\|_{\un H^{-1}(V)} := \sup \Ll\{ |V|^{-1} \int_V f \, g, \quad g \in H^1_0(V), \ \|g\|_{\un H^1(V)} \le 1 \Rr\} .
\end{equation*}
The integral sign above is an abuse of notation and should be understood as the duality pairing between $H^{-1}(V)$ and $H^1_0(V)$. The spaces $H^{-1}(V)$ and $H^1_0(V)$ can be continuously embedded into the space of distributions, and we make sure that the duality pairing is consistent with the integral expression above whenever $f$ and $g$ are smooth functions.
For every~$r > 0$ and~$x \in \Rd$, we denote the time-slice of the heat kernel which has length scale ~$r$ by
\begin{equation}
\label{e.gaussian}
\Phi_r(x) := (4\pi r^2)^{-\frac d 2} \exp \Ll( -\frac{x^2}{4r^2} \Rr) .
\end{equation}
We denote by $\zeta \in C^\infty_c(\Rd)$ the standard mollifier
\begin{equation}
\label{e.def.zeta}
\zeta(x) : = \left\{ 
\begin{aligned}
& c_d \, \exp\left( - (1-|x|^2)^{-1} \right) & \mbox{if} & \ |x|<1,\\
& 0 & \mbox{if} & \ |x| \geq 1,
\end{aligned}
\right.
\end{equation}
where the constant~$c_d$ is chosen so that $\int_{\R^d} \zeta = 1$. 
For $f \in L^p(\Rd)$ and $g \in L^{p'}(\Rd)$ with $\frac 1 p + \frac 1 {p'} = 1$, we denote the convolution of $f$ and $g$ by
\begin{equation*}  
f \ast g (x) := \int_\Rd f(y) g(x-y) \, dy.
\end{equation*}

%
%
%
%
%
%
\section{Proof of Theorem~\ref{t.contract}}
\label{s.proof}

This section is devoted to the proof of~Theorem~\ref{t.contract}. We begin by introducing the notion of \emph{(first-order) corrector}: for each $p \in \Rd$, the corrector in the direction of $p$ is the function $\phi_p \in H^1_{\mathrm{loc}}(\Rd)$ solving
\begin{equation*}  
-\nabla \cdot \a \Ll( p + \nabla \phi_p \Rr) = 0 \qquad \text{in } \Rd,
\end{equation*}
and such that the mapping $x \mapsto \nabla \phi_p(x)$ is $\Z^d$-stationary and satisfies
\begin{equation*}  
\E \Ll[ \int_{[0,1]^d} \nabla \phi_p \Rr] = 0.
\end{equation*}
The corrector $\phi_p$ is unique up to an additive constant (see~\cite[Definition~4.2]{AKMbook} for instance). We also recall that one can define the homogenized matrix $\ahom \in \R^{d\times d}_\mathrm{sym}$ via the formula
\begin{equation*}  
\forall p \in \Rd, \quad \ahom p = \E \Ll[ \int_{[0,1]^d} \a(p + \nabla \phi_p )\Rr] ,
\end{equation*}
or equivalently,
\begin{equation*}  
\forall p \in \Rd, \quad p \cdot \ahom p = \E \Ll[ \int_{[0,1]^d} (p + \nabla \phi_p ) \cdot \a (p + \nabla \phi_p )\Rr] ,
\end{equation*}
and in particular, as a consequence of \eqref{e.unif.ell}, we have
\begin{equation}
\label{e.ahom.ell}
\forall \xi \in \Rd, \quad \Lambda^{-1} |\xi|^2 \le \xi \cdot \ahom \xi \le \Lambda |\xi|^2.
\end{equation}
For each $k \in \{1,\ldots,d\}$ and $\lambda > 0$, we denote 
\begin{equation*}  
\phil := \phi_{e_k} - \phi_{e_k} \ast \Phi_{\lambda^{-1}}.
\end{equation*}

A key ingredient in the proof of Theorem~\ref{t.contract} is the following \emph{quantitative two-scale expansion} for the operator~$\left(\lambda^2 - \nabla \cdot \a\nabla \right)$. It is the only input from the quantitative theory of stochastic homogenization used in this paper and it follows from some estimates which can be found in~\cite{AKMbook}.

\begin{theorem}[Two-scale expansion and error estimate]
\label{t.twoscale}
For each $s \in (0,2)$, there exists a constant $C(s,U,\Lambda,\alpha, d) < \infty$ such that, for every $r \ge 1$, $\lambda \in [r^{-1},1]$, and $\bar v \in H^1_0(U_r) \cap H^{2}(U_r)$, defining
\begin{equation}  
\label{e.def.w}
w := \bar v  + \sum_{k = 1}^d \phil \, \partial_{x_k} \bar v,
\end{equation}
we have the estimate
\begin{equation}  
\label{e.twoscale}
 \Ll\| \nabla \cdot \Ll( \a \nabla w -  \ahom \nabla \bar v \Rr)\Rr\|_{\un H^{-1}(U_r)} 
\le 
\O_s  \Ll( C \ell(\lambda)\|\bar v\|_{\un H^{2}(U_r)} +  C \lambda^{\frac d 2} \, \|\bar v\|_{\un H^{1}(U_r)} \Rr).
\end{equation}
Moreover, 
for every $\mu \in [0,\lambda]$ and
$v \in H^1_0(U_r)$ such that 
\begin{equation}  
\label{e.eq.v.barv}
\Ll(\mu^2 -\nabla \cdot \a \nabla\Rr) v = \Ll(\mu^2 - \nabla \cdot \ahom \nabla\Rr) \bar v,
\end{equation}
we have the estimate
\begin{multline}  
\label{e.error.estimates}
\Ll\| v - w \Rr\|_{\un H^1(U_r)} + \Ll(\mu + r^{-1} \Rr) \|v-\bar v\|_{\un L^2(U_r)} + \Ll(\mu + r^{-1} \Rr)^2 \|v-\bar v\|_{\un H^{-1}(U_r)} 
\\
\le \O_s  \Ll( C \Ll(\mu \ell(\lambda) + \lambda^\frac d 2\Rr) \|\bar v\|_{\un H^1(U_r)} +  C \ell(\lambda)^\frac 1 2 \,\| \bar v \|_{\un H^1(U_r)}^\frac 1 2 \, \|\bar v\|_{\un H^2(U_r)}^\frac 1 2 +  C \ell(\lambda) \|\bar v\|_{\un H^2(U_r)} \Rr).
\end{multline}
\end{theorem}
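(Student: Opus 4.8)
The plan is to expand $\a\nabla w-\ahom\nabla\bar v$ with the standard two-scale ansatz, rewrite the non-small flux-type terms in divergence form using the flux corrector, and then reduce everything to pointwise stochastic bounds on the correctors that are available from \cite{AKMbook}. The only substantive external input is a package of such bounds on the first-order corrector $\phi_{e_k}$ and on the associated skew-symmetric flux corrector $\S_{e_k}$ (the matrix field with $\nabla\cdot\S_{e_k}=\a(e_k+\nabla\phi_{e_k})-\ahom e_k$): for every $s\in(0,2)$ and uniformly in $x$,
\[
|\nabla\phi_{e_k}(x)|\le\O_s(C),\qquad |\phil(x)|+\big|\S_{e_k}(x)-(\S_{e_k}\ast\Phi_{\lambda^{-1}})(x)\big|\le\O_s\big(C\ell(\lambda)\big),
\]
\[
\big|(\nabla\phi_{e_k})\ast\Phi_{\lambda^{-1}}(x)\big|+\big|\big(\a(e_k+\nabla\phi_{e_k})-\ahom e_k\big)\ast\Phi_{\lambda^{-1}}(x)\big|\le\O_s\big(C\lambda^{\frac d2}\big);
\]
the first two are spatial-increment estimates (in $d=2$ the factor $\ell(\lambda)$ reflects the logarithmic growth of the correctors, and it is crucial that only the \emph{increments} $\phil$, $\S_{e_k}-\S_{e_k}\ast\Phi_{\lambda^{-1}}$, never $\phi_{e_k}$ or $\S_{e_k}$ themselves, enter, so that $\ell(\lambda)$ and not $\ell(r)$ appears), while the last two quantify the cancellation in spatial averages of the mean-zero fields $\nabla\phi_{e_k}$ and $\a(e_k+\nabla\phi_{e_k})-\ahom e_k$ coming from stationarity and the unit range of dependence. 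These are combined with deterministic $L^2$ weights through subadditivity of $\O_s(\cdot)$: if $h\in L^2(U_r)$ is multiplied pointwise by a random field that is $\le\O_s(\theta)$ at every point for a constant $\theta$, then, passing through $\O_{s/2}$ to square and using \eqref{e.Osums}, the $L^2(U_r)$ norm of the product is $\le\O_s(C\theta\|h\|_{L^2(U_r)})$. (A routine joint-measurability check in $(\a,x)$, valid since all these fields are continuous in $x$ for fixed $\a$, legitimizes the use of \eqref{e.Osums}.)

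For \eqref{e.twoscale}, set $\F:=\a\nabla w-\ahom\nabla\bar v$; differentiating \eqref{e.def.w} gives $\F=\sum_k(\partial_{x_k}\bar v)\big(\a(e_k+\nabla\phil)-\ahom e_k\big)+\sum_k\phil\,\a\nabla\partial_{x_k}\bar v$. Using $\nabla\phil=\nabla\phi_{e_k}-(\nabla\phi_{e_k})\ast\Phi_{\lambda^{-1}}$, the flux-corrector identity, and linearity of convolution, the first bracket becomes
\[
\a(e_k+\nabla\phil)-\ahom e_k=\nabla\cdot\sigma^{(\lambda)}_{e_k}+\big(\a(e_k+\nabla\phi_{e_k})-\ahom e_k\big)\ast\Phi_{\lambda^{-1}}-\a\big((\nabla\phi_{e_k})\ast\Phi_{\lambda^{-1}}\big),
\]
where $\sigma^{(\lambda)}_{e_k}:=\S_{e_k}-\S_{e_k}\ast\Phi_{\lambda^{-1}}$ is still skew-symmetric. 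The term $\sum_k(\partial_{x_k}\bar v)\,\nabla\cdot\sigma^{(\lambda)}_{e_k}$ is the only non-small one, but its pairing against $\nabla\psi$ for $\psi\in H^1_0(U_r)$ equals that of the vector field with components $\sum_{i,k}(\partial_{x_i}\partial_{x_k}\bar v)(\sigma^{(\lambda)}_{e_k})_{ij}$ — the two differ by a divergence-free field, all terms involving three derivatives of $\bar v$ or two derivatives of $\sigma^{(\lambda)}_{e_k}$ cancelling by skew-symmetry — and that field has modulus $\ls|\nabla^2\bar v|\,\max_k|\sigma^{(\lambda)}_{e_k}|$. Hence, testing $\nabla\cdot\F$ against $\psi\in H^1_0(U_r)$ and invoking the corrector bounds together with the $\O_s$-calculus above, every contribution is at most $\|\nabla\psi\|_{L^2(U_r)}$ times $\O_s\big(C\ell(\lambda)\|\nabla^2\bar v\|_{L^2(U_r)}+C\lambda^{d/2}\|\nabla\bar v\|_{L^2(U_r)}\big)$; dividing by $|U_r|$, taking the supremum over admissible $\psi$, and rewriting in scaled norms yields \eqref{e.twoscale}. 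The integrations by parts are carried out first for smooth $\psi$ and $\bar v$ and extended by density, which is what requires $\bar v\in H^2$.

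For \eqref{e.error.estimates}, write $g:=w-\bar v=\sum_k\phil\,\partial_{x_k}\bar v$; then \eqref{e.eq.v.barv} gives $(\mu^2-\nabla\cdot\a\nabla)(v-w)=-\mu^2 g+\nabla\cdot\F$ in $U_r$, with the nonzero boundary datum $v-w=-g$ on $\partial U_r$. To neutralize this datum I would fix a cutoff $\eta\in C^\infty_c(U_r)$ with $\eta\equiv1$ on $\{x\in U_r:\dist(x,\partial U_r)>\delta\}$, $\eta\equiv0$ near $\partial U_r$, $|\nabla\eta|\ls\delta^{-1}$, with collar width $\delta\simeq\ell(\lambda)$ — the choice that balances the boundary terms. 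Then $w_\eta:=\bar v+\eta g\in H^1_0(U_r)$, so $v-w_\eta\in H^1_0(U_r)$ and solves $(\mu^2-\nabla\cdot\a\nabla)(v-w_\eta)=-\mu^2\eta g+\nabla\cdot\F_\eta$ with $\F_\eta:=\a\nabla w_\eta-\ahom\nabla\bar v$, and a direct computation gives
\[
\F_\eta=\sum_k(\partial_{x_k}\bar v)\,\eta\big(\a(e_k+\nabla\phil)-\ahom e_k\big)+\eta\sum_k\phil\,\a\nabla\partial_{x_k}\bar v+(1-\eta)(\a-\ahom)\nabla\bar v+\a g\nabla\eta,
\]
so $\F_\eta$ coincides with $\F$ where $\eta\equiv1$ (that contribution is controlled exactly as in \eqref{e.twoscale}), while the last two terms are supported in the $\delta$-collar of $\partial U_r$. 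Since $\bar v$ vanishes on $\partial U_r$, the trace inequality gives $\|\nabla\bar v\|_{L^2(\{\dist(\cdot,\partial U_r)<\delta\})}^2\ls\delta r^{-1}\|\nabla\bar v\|_{L^2(U_r)}^2+\delta\|\nabla\bar v\|_{L^2(U_r)}\|\nabla^2\bar v\|_{L^2(U_r)}+\delta^2\|\nabla^2\bar v\|_{L^2(U_r)}^2$, and keeping the middle (geometric-mean) term is precisely what produces the interpolated contribution in \eqref{e.error.estimates}; combined with $|g|\le\O_s(C\ell(\lambda)|\nabla\bar v|)$, $|\nabla\eta|\ls\delta^{-1}$, the $\O_s$-calculus, and the choice $\delta\simeq\ell(\lambda)$, each collar term of $\F_\eta$ is $\le\O_s\big(C\ell(\lambda)^{1/2}\|\bar v\|_{\un H^1(U_r)}^{1/2}\|\bar v\|_{\un H^2(U_r)}^{1/2}+C\ell(\lambda)\|\bar v\|_{\un H^2(U_r)}\big)$ in $\un L^2(U_r)$. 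Running the standard scaled energy and duality estimates for $\mu^2-\nabla\cdot\a\nabla$ on $H^1_0(U_r)$ (collected in the appendix, and accounting for the weights $\mu+r^{-1}$ and $(\mu+r^{-1})^2$), together with $\|g\|_{\un L^2(U_r)}\le\O_s(C\ell(\lambda)\|\bar v\|_{\un H^1(U_r)})$, bounds $\|\nabla(v-w_\eta)\|_{\un L^2}$, $(\mu+r^{-1})\|v-w_\eta\|_{\un L^2}$ and $(\mu+r^{-1})^2\|v-w_\eta\|_{\un H^{-1}}$ by the right-hand side of \eqref{e.error.estimates}, the term $-\mu^2\eta g$ producing the $\mu\ell(\lambda)\|\bar v\|_{\un H^1}$ contribution. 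Finally $\|v-w\|_{\un H^1}\le\|v-w_\eta\|_{\un H^1}+\|(1-\eta)g\|_{\un H^1}$ and $\|v-\bar v\|\le\|v-w_\eta\|+\|\eta g\|$ (in the relevant scaled norms), and the collar-supported remainders $\|(1-\eta)g\|$ and the $\|\eta g\|$-terms are reabsorbed into the right-hand side of \eqref{e.error.estimates} by the same trace and $\O_s$ estimates.

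The expansion, the skew-symmetry algebra, and the final counting of norms are routine. The two genuine difficulties are: (i) extracting from \cite{AKMbook} exactly the pointwise corrector estimates displayed above with the sharp scalings $\ell(\lambda)$ and $\lambda^{d/2}$ (including in $d=2$, where one must never let the correctors themselves, only their increments, appear); and (ii) the boundary-layer bookkeeping — choosing the collar width $\delta\simeq\ell(\lambda)$ so that every collar error is dominated by either the interpolated term $\ell(\lambda)^{1/2}\|\bar v\|_{\un H^1}^{1/2}\|\bar v\|_{\un H^2}^{1/2}$ or the term $\ell(\lambda)\|\bar v\|_{\un H^2}$ of \eqref{e.error.estimates}, and tracking the several remainder terms so that each lands under one of the three terms on the right-hand side. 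I expect (ii) to be the main obstacle.
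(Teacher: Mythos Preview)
Your proposal is correct and follows essentially the same route as the paper: the flux-corrector identity and skew-symmetry reduce \eqref{e.twoscale} to the pointwise corrector bounds of Proposition~\ref{p.corrector} (which are exactly your ``package''), and for \eqref{e.error.estimates} you cut off the boundary datum in a collar of width $\simeq\ell(\lambda)$ and use the trace estimate (Proposition~\ref{p.trace}) to generate the interpolated term. The only organizational difference is that the paper introduces the $\a$-harmonic extension $b$ of the boundary datum and bounds it variationally by your cutoff competitor $T=(1-\eta)g$, whereas you work directly with $w_\eta=w-T$; the two are equivalent, and your version is arguably more direct. Your handling of the $H^{-1}$ term via unspecified ``duality estimates'' is the one place where the paper is more explicit: it simply reads off $\mu^2(v-\bar v)=\nabla\cdot(\a\nabla v-\ahom\nabla\bar v)$ from \eqref{e.eq.v.barv}, splits the right-hand side as $\nabla\cdot(\a\nabla w-\ahom\nabla\bar v)+\nabla\cdot\a\nabla(v-w)$, and invokes \eqref{e.twoscale} and the already-proved $H^1$ bound.
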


\smallskip

The proof of Theorem~\ref{t.twoscale} follows that of a similar result from Chapter~6 of~\cite{AKMbook}. The main difference here is the presence of the zeroth order term with the factor of~$\mu^2$, which presents no additional difficulty. We begin by recalling the concept of a flux corrector and stating some estimates on the correctors proved in~\cite{AKMbook}. 

\smallskip

For each $p \in \Rd$, we denote the (centered) flux of the corrector $\phi_p$ by
\begin{equation*}  
(\g_{p,i})_{1 \le i \le d} = \g_p := \a(p + \nabla \phi_p) - \ahom p.
\end{equation*}
Since $\nabla \cdot \g_p = 0$, the flux of the corrector admits a representation as the ``curl'' of some vector potential, by Helmholtz's theorem. This vector potential, the \emph{flux corrector}, will be useful for the proof of Theorem~\ref{t.twoscale}. For each $p \in \Rd$, the vector potential $(\S_{p,ij})_{1 \le i,j \le d}$ is a matrix-valued random field with entries in $H^1_{\mathrm{loc}}(\Rd)$ satisfying, for each $i,j \in \{1,\ldots,d\}$,
\begin{equation*}  
\S_{p,ij} = - \S_{p,ji},
\end{equation*}
\begin{equation}  
\label{e.def.S}
\nabla \cdot \S_{p} = \g_p,
\end{equation}
and such that $x \mapsto \nabla \S_{p,ij}(x)$ is a stationary random field with mean zero. In \eqref{e.def.S}, we used the shorthand notation
\begin{equation*}  
(\nabla \cdot \S_e)_i := \sum_{j = 1}^d \partial_{x_j} \S_{e,ij}. 
\end{equation*}
The conditions above do not specify the flux corrector uniquely. One way to ``fix the gauge'' is to enforce that, for each $i,j \in \{1,\ldots,d\}$,
\begin{equation*}  
\Delta \S_{p,ij} = \partial_{x_j} \g_{p,i} - \partial_{x_i} \g_{p,j}.
\end{equation*}
This latter choice then defines $\S_{p,ij}$ uniquely, up to the addition of a constant. We refer to \cite[Section~6.1]{AKMbook} for more precision on this construction.
We set
\begin{equation}  
\label{e.Sl}
\Sl_{e} := \S_e - \S_e \ast \Phi_{\lambda^{-1}}.
\end{equation}
The fundamental ingredient for the proof of Theorem~\ref{t.twoscale} is the following proposition, which quantifies the convergence to zero of the spatial averages of the gradients of the correctors.
\begin{proposition}[Corrector estimates]
For each $s \in (0,2)$, there exists a constant $C(s,U,\Lambda,\alpha,d) < \infty$ such that for every $\lambda \in (0,1)$, $x \in \Rd$ and $i,j,k \in \{1,\ldots,d\}$,
\label{p.corrector}
\begin{equation}
\label{e.apriori.nablaphi}
|\nabla \phi_{e_k}(x)| \le \O_s \Ll( C \Rr) ,
\end{equation}
\begin{equation}
\label{e.bound.gradphi}
\Ll| \Ll(\nabla \phi_{e_k} \ast \Phi_{\lambda^{-1}}\Rr) (x) \Rr| 
+ \Ll| \Ll(\nabla \S_{e_k,ij} \ast \Phi_{\lambda^{-1}} \Rr)(x) \Rr| \le \O_s \Ll( C \lambda^{\frac d 2} \Rr) ,
\end{equation}
\begin{equation}
\label{e.bound.phi}
\Ll|\phil(x)\Rr| + \Ll|\Sl_{e_k,ij}(x)\Rr| = \O_s \Ll( C \ell(\lambda) \Rr) .
\end{equation}
\begin{proof}
By \cite[Lemma~4.4]{AKMbook}, we have
\begin{equation*}  
\|\nabla \phi_{e_k}\|_{L^2(B(0,1))} \le \O_s \Ll( C \Rr) .
\end{equation*}
By the assumption of \eqref{e.holder}, we can apply standard Schauder estimates, see e.g.\ \cite[Theorems 3.1 and 3.8]{HL}, to deduce \eqref{e.apriori.nablaphi}. The estimates in \eqref{e.bound.gradphi} are proved in \cite[Theorem~4.9 and Proposition~6.2]{AKMbook}. The estimates in \eqref{e.bound.phi} also follow from \cite[Theorem~4.9 and Proposition~6.2]{AKMbook}, combined with the assumption of \eqref{e.holder} and the Schauder estimate in \cite[Corollary~3.2 and Theorem~3.8]{HL}. 
\end{proof}
\end{proposition}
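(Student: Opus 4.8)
\textbf{Proof plan for Proposition~\ref{p.corrector}.}

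The strategy is to import the large-scale corrector and flux-corrector estimates already available in~\cite{AKMbook} and upgrade them to the pointwise (in $x$) statements claimed here using the H\"older regularity~\eqref{e.holder} of the coefficients and elliptic (Schauder) regularity. There are three bounds to establish, and each follows the same two-stage pattern: first an $L^2$-type bound on a unit ball coming from~\cite{AKMbook}, then a Schauder step to pass from an $L^2$ average to a pointwise value. Throughout, recall that $\nabla\phi_{e_k}$ and $\nabla\S_{e_k,ij}$ are stationary, so it suffices to work near the origin, and that by $\Z^d$-stationarity the $\O_s$ constants are uniform in $x$.

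For~\eqref{e.apriori.nablaphi}: start from the bound $\|\nabla\phi_{e_k}\|_{L^2(B(0,1))}\le\O_s(C)$, which is~\cite[Lemma~4.4]{AKMbook}. Since $\phi_{e_k}$ solves $-\nabla\cdot\a(e_k+\nabla\phi_{e_k})=0$ and $\a$ is uniformly elliptic and $C^{0,\alpha}$ by~\eqref{e.unif.ell} and~\eqref{e.holder}, interior Schauder estimates (e.g.~\cite[Theorems~3.1 and~3.8]{HL}) give $\|\nabla\phi_{e_k}\|_{L^\infty(B(0,1/2))}\le C\bigl(\|\nabla\phi_{e_k}\|_{L^2(B(0,1))}+|e_k|\bigr)$; evaluating at $x=0$ and translating yields~\eqref{e.apriori.nablaphi}. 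For~\eqref{e.bound.gradphi}: the key inputs are the bounds on spatial averages of $\nabla\phi_{e_k}$ and $\nabla\S_{e_k,ij}$ against the heat kernel $\Phi_{\lambda^{-1}}$, which are exactly~\cite[Theorem~4.9 and Proposition~6.2]{AKMbook} (these give the rate $\lambda^{d/2}$, reflecting that averaging over a length scale $\lambda^{-1}$ produces a CLT-type gain $(\lambda^{-1})^{-d/2}$); note here no Schauder step is needed since the convolution with $\Phi_{\lambda^{-1}}$ already smooths, and one may directly invoke the cited estimates, possibly after one more mollification on the unit scale as in~\cite{AKMbook}.

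For~\eqref{e.bound.phi}: this is the bound on $\phil = \phi_{e_k}-\phi_{e_k}\ast\Phi_{\lambda^{-1}}$ and its flux analogue $\Sl_{e_k,ij}$. Here the dimension-dependent factor $\ell(\lambda)$ enters because in $d=2$ the corrector itself grows (sublinearly, with a logarithmic correction), whereas in $d\ge3$ it is bounded; the relevant bounds on $\phi_{e_k}$ and $\S_{e_k,ij}$ (after subtracting an additive constant, normalized e.g.\ by the heat-kernel average) are again contained in~\cite[Theorem~4.9 and Proposition~6.2]{AKMbook}. One writes $\phi_{e_k}(x)-(\phi_{e_k}\ast\Phi_{\lambda^{-1}})(x)=\int \Phi_{\lambda^{-1}}(y)\bigl(\phi_{e_k}(x)-\phi_{e_k}(x-y)\bigr)\,dy$, controls the increment via the $L^2$ bound on $\phi_{e_k}-(\text{average})$ on balls of radius $\le\lambda^{-1}$, and then passes to the pointwise statement using the Schauder estimate in~\cite[Corollary~3.2 and Theorem~3.8]{HL} together with~\eqref{e.holder}; the triangle inequality for $\O_s$ in the form~\eqref{e.Osums} is used to combine contributions across dyadic scales.

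\textbf{Main obstacle.} The genuinely substantive content — the quantitative decay of corrector averages and the sublinear growth of the correctors themselves, i.e.\ the estimates behind~\cite[Theorem~4.9 and Proposition~6.2]{AKMbook} — is quoted as a black box; the work left in this proof is the regularity bookkeeping. The subtlest point is therefore the passage from the integrated/$L^2$-averaged bounds of~\cite{AKMbook} to the pointwise-in-$x$ statements: one must check that the Schauder constants are deterministic (depending only on $\Lambda,\alpha,d$), that the $\O_s$ stochastic integrability is preserved under these deterministic linear operations (which it is, by the very definition~\eqref{e.def.Os}), and — for~\eqref{e.bound.phi} — that summing the increment estimates over the dyadic annuli up to scale $\lambda^{-1}$ reproduces exactly the factor $\ell(\lambda)$ and not a worse power of $\log(1+\lambda^{-1})$, which is why~\eqref{e.Osums} with its $d$-dependent logarithmic accounting is needed.
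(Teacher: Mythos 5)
Your proposal is correct and follows essentially the same route as the paper: the core stochastic estimates are quoted from \cite[Lemma~4.4, Theorem~4.9 and Proposition~6.2]{AKMbook}, and the pointwise-in-$x$ statements are obtained from the H\"older continuity assumption~\eqref{e.holder} together with the Schauder estimates of~\cite{HL}, exactly as in the paper's proof. The additional bookkeeping you describe (stationarity in $x$, deterministic Schauder constants, preservation of $\O_s$ under these operations) is consistent with, and only slightly more detailed than, what the paper records.
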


In the next lemma, we provide a convenient representation of~$\nabla \cdot \a \nabla w$ in terms of the correctors.

\begin{lemma}
\label{l.two-scale}
Let $\lambda >0$, $\bar v \in H^1(U_r)$, and let $w \in H^1(U_r)$ be defined by~\eqref{e.def.w}. 
Then
\begin{equation*}  
\nabla \cdot \Ll(\a \nabla w -  \ahom \nabla \bar v\Rr) = \nabla \cdot \mathbf F,
\end{equation*}
where the $i$-th component of the vector field $\mathbf F$ is given by
\begin{multline}  
\label{e.def.Fi}
\mathbf F_i := 
\sum_{j,k = 1}^d  \Ll(\a_{ij} \phil -  \Sl_{e_k,ij}  \Rr)  \partial_{x_j} \dr_{x_k}  \bar v \\
+ \sum_{j,k = 1}^d  \Ll(\a_{ij}\Ll(\dr_{x_j} \phi_{e_k} \ast \Phi_{\lambda^{-1}}\Rr)   + \dr_{x_j} \S_{e,ij} \ast \Phi_{\lambda^{-1}}\Rr)\dr_{x_k} \bar v.
\end{multline}
\end{lemma}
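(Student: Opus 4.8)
The plan is to perform a direct computation: expand $\nabla \cdot \a \nabla w$ using the definition of $w$ in \eqref{e.def.w}, invoke the corrector equation to cancel the leading-order term, and organize the remaining terms into a divergence. First I would write $w = \bar v + \sum_k \phi^{(\lambda)}_{e_k} \partial_{x_k} \bar v$ and compute $\nabla w$ by the product rule, so that $\partial_{x_i} w = \partial_{x_i} \bar v + \sum_k (\partial_{x_i}\phi^{(\lambda)}_{e_k}) \partial_{x_k}\bar v + \sum_k \phi^{(\lambda)}_{e_k}\, \partial_{x_i}\partial_{x_k}\bar v$. Applying $\a$ and recalling $\phi^{(\lambda)}_{e_k} = \phi_{e_k} - \phi_{e_k}\ast\Phi_{\lambda^{-1}}$, I would split $\nabla\phi^{(\lambda)}_{e_k} = \nabla\phi_{e_k} - \nabla\phi_{e_k}\ast\Phi_{\lambda^{-1}}$, keeping the genuine corrector gradient $\nabla\phi_{e_k}$ with the leading term and shunting the mollified piece $\nabla\phi_{e_k}\ast\Phi_{\lambda^{-1}}$ into the error field (this accounts for the second line of \eqref{e.def.Fi}).

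The key cancellation is the following. Using the corrector equation $-\nabla\cdot \a(e_k + \nabla\phi_{e_k}) = 0$ together with the flux corrector identity $\nabla \cdot \S_{e_k} = \g_{e_k} = \a(e_k + \nabla\phi_{e_k}) - \ahom e_k$ from \eqref{e.def.S}, one rewrites the combination $\a(e_k + \nabla\phi_{e_k}) - \ahom e_k$ as $\nabla\cdot\S_{e_k}$, and the antisymmetry $\S_{e_k,ij} = -\S_{e_k,ji}$ allows the term $\sum_k (\nabla\cdot\S_{e_k})\,\partial_{x_k}\bar v$ to be reorganized. Concretely, I would use the standard computation (as in \cite[Section~6.1]{AKMbook}) that, for the $i$-th component,
\begin{equation*}
\sum_{j,k} \a_{ij}(\delta_{jk} + \partial_{x_j}\phi_{e_k})\partial_{x_k}\bar v - \sum_k \ahom_{ik}\partial_{x_k}\bar v = \sum_{j,k}(\nabla\cdot\S_{e_k})_i\,\partial_{x_k}\bar v,
\end{equation*}
and then, taking the divergence in $x_i$, the term $\sum_{i,k}\partial_{x_i}\big((\nabla\cdot\S_{e_k})_i\,\partial_{x_k}\bar v\big)$ simplifies because $\sum_i\partial_{x_i}(\nabla\cdot\S_{e_k})_i = \sum_{i,j}\partial_{x_i}\partial_{x_j}\S_{e_k,ij} = 0$ by antisymmetry. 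What survives is $\sum_{i,j,k}\partial_{x_i}\big(\S_{e_k,ij}\,\partial_{x_j}\partial_{x_k}\bar v\big) \cdot(-1)$ after an integration by parts of the Leibniz expansion, producing exactly the $-\Sl_{e_k,ij}\partial_{x_j}\partial_{x_k}\bar v$ piece in \eqref{e.def.Fi} once the mollification is reinstated; the term $\sum_{i,j,k}\partial_{x_i}\big(\a_{ij}\phi^{(\lambda)}_{e_k}\partial_{x_j}\partial_{x_k}\bar v\big)$ comes from the last summand of $\partial_{x_i}w$ above. I would need to be careful to track which portions get hit by $\ast\Phi_{\lambda^{-1}}$: the $\phi^{(\lambda)}_{e_k}$ and $\Sl_{e_k,ij}$ (first line of \eqref{e.def.Fi}) carry the subtracted mollified corrector, while the leftover mollified gradients $\nabla\phi_{e_k}\ast\Phi_{\lambda^{-1}}$ and $\nabla\S_{e_k}\ast\Phi_{\lambda^{-1}}$ (second line) are precisely what remains uncancelled from the splitting.

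The main obstacle is purely bookkeeping: correctly distributing the Leibniz product-rule terms between the ``divergence-of-$\S$'' cancellation and the error field $\mathbf F$, and verifying that every term ends up either cancelled or accounted for in \eqref{e.def.Fi} with the right mollification. There is no analytic difficulty here — all identities are algebraic consequences of the corrector and flux-corrector equations, and the regularity is enough (with $\bar v \in H^1$, the identity is understood distributionally, and the products $\phi^{(\lambda)}_{e_k}\partial_{x_k}\bar v$ etc.\ make sense as $H^1_{\mathrm{loc}}$ functions times $L^2$ functions, or one argues first for smooth $\bar v$ and passes to the limit). I would also double-check the sign conventions in $\nabla\cdot\S_e$ as defined via $(\nabla\cdot\S_e)_i = \sum_j \partial_{x_j}\S_{e,ij}$ to make sure the antisymmetry kills $\partial_{x_i}\partial_{x_j}\S_{e_k,ij}$ with the correct index placement, since a sign error there would propagate into \eqref{e.def.Fi}.
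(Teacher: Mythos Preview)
Your proposal is correct and follows essentially the same route as the paper's proof: expand $\nabla w$ by the product rule and split $\nabla\phi^{(\lambda)}_{e_k}$, use the flux-corrector identity $\a(e_k+\nabla\phi_{e_k})-\ahom e_k=\nabla\cdot\S_{e_k}$ (which already encodes the corrector equation), and exploit the antisymmetry of $\S^{(\lambda)}_{e_k}$ via $0=\sum_{i,j}\partial_{x_i}\partial_{x_j}(\S^{(\lambda)}_{e_k,ij}\partial_{x_k}\bar v)$ to move the $\partial_{x_j}\S^{(\lambda)}_{e_k,ij}$ factor onto $\partial_{x_j}\partial_{x_k}\bar v$. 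The only cosmetic difference is that the paper performs the mollification splitting $\S=\S^{(\lambda)}+\S\ast\Phi_{\lambda^{-1}}$ \emph{before} applying the antisymmetry trick (so that $\partial_{x_j}\S_{e_k,ij}\ast\Phi_{\lambda^{-1}}$ lands directly in the second line of $\mathbf F_i$), whereas you describe working with $\S$ first and ``reinstating'' the mollification afterwards---both orderings lead to \eqref{e.def.Fi}.
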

\begin{proof}
The argument is very similar to that for \cite[Lemma~6.6]{AKMbook}, the main difference being that the definition of $\phil$ is slightly different from that of $\phi^\eps_{e_k}$ there. We recall the argument here for the reader's convenience. 
Observe that, for each $j \in \{1,\ldots,d\}$,
\begin{equation}  
\label{e.split.in.three}
\partial_{x_j} w = 
 \sum_{k = 1}^d \Ll( \Ll(\delta_{jk} + \partial_{x_j} \phi_{e_k}\Rr)\partial_{x_k} \bar v -\Ll(  \partial_{x_j} \phi_{e_k} \ast \Phi_{\lambda^{-1}}\Rr)\partial_{x_k} \bar v   +  \phil \, \partial_{x_j}\partial_{x_k} \bar v \Rr).
\end{equation}
We start by studying the contribution of the first summand. By~\eqref{e.def.S} and~\eqref{e.Sl}, we have, for every $i,k \in \{1,\ldots, d\}$,
\begin{equation*}  
\sum_{j = 1}^d \partial_{x_j} \Sl_{e_k,ij} = \sum_{j = 1}^d \Ll( \a_{ij} \Ll( \delta_{jk} + \partial_{x_j} \phi_{e_k} \Rr) - \ahom_{ij} \delta_{jk} -\partial_{x_j} \S_{e_k,ij} \ast \Phi_{\lambda^{-1}}\Rr) .
\end{equation*}
We deduce that, for each $i \in \{1,\ldots,d\}$,
\begin{equation}  
\label{e.integr.to.S}
\sum_{j,k = 1}^d \a_{ij}  \Ll(\delta_{jk} + \partial_{x_j} \phi_{e_k}\Rr) \partial_{x_k} \bar v
= \sum_{j,k = 1}^d \Ll( \ahom_{ij} \delta_{jk} + \partial_{x_j} \Sl_{e_k,ij} + \partial_{x_j} \S_{e_k,ij} \ast \Phi_{\lambda^{-1}}\Rr)  \partial_{x_k} \bar v,
\end{equation}
and thus
\begin{multline*}  
  \sum_{i,j,k = 1}^d \partial_{x_i} \Ll(\a_{ij}   \Ll(\delta_{jk} + \partial_{x_j} \phi_{e_k}\Rr)  \partial_{x_k} \bar v   \Rr) 
 =  \nabla \cdot \ahom \nabla  \bar v\\
 + \sum_{i,j,k = 1}^d \partial_{x_i} \Ll(  \partial_{x_j} \Sl_{e_k,ij} \, \partial_{x_k} \bar v \Rr) 
 + \sum_{i,j,k = 1}^d \dr_{x_i} \Ll( \Ll(\dr_{x_j} \S_{e_k,ij} \ast \Phi_{\lambda^{-1}} \Rr)\partial_{x_k} \bar v \Rr)  .
\end{multline*}
By the skew-symmetry of $\Sl_e$, we have
\begin{align*}  
0 & = \sum_{i,j,k = 1}^d \partial_{x_i} \partial_{x_j} \Ll( \Sl_{e_k,ij}\, \partial_{x_k} \bar v\Rr) \\
& = \sum_{i,j,k = 1}^d \partial_{x_i} \Ll( \partial_{x_j} \Sl_{e_k,ij} \, \partial_{x_k} \bar v\Rr)  + \sum_{i,j,k = 1}^d \partial_{x_i} \Ll(  \Sl_{e_k,ij} \, \partial_{x_j} \partial_{x_k} \bar v \Rr) ,
\end{align*}
and thus
\begin{multline*}  
  \sum_{i,j,k = 1}^d \partial_{x_i} \Ll(\a_{ij}   \Ll(\delta_{jk} + \partial_{x_j} \phi_{e_k}\Rr)  \partial_{x_k} \bar v   \Rr) 
 =  \nabla \cdot \ahom \nabla  \bar v\\
  - \sum_{i,j,k = 1}^d \partial_{x_i} \Ll(  \Sl_{e_k,ij} \, \partial_{x_j} \partial_{x_k} \bar v  \Rr) 
  +  \sum_{i,j,k = 1}^d \dr_{x_i} \Ll( \Ll(\dr_{x_j} \S_{e_k,ij} \ast \Phi_{\lambda^{-1}} \Rr) \partial_{x_k} \bar v\Rr)  .
\end{multline*}
Recalling \eqref{e.split.in.three}, we obtain the announced result.
\end{proof}

We next present the proof of Theorem~\ref{t.twoscale}, which can be compared to the one of~\cite[Theorem 6.9]{AKMbook}.

\begin{proof}[Proof of Theorem~\ref{t.twoscale}]
We will proceed by proving first \eqref{e.twoscale}, and then the $H^1$, $L^2$ and $H^{-1}$ estimates appearing in \eqref{e.error.estimates}, in this order. We decompose the arguments into seven steps.

\smallskip

\emph{Step 1.} We prove \eqref{e.twoscale}. In view of Lemma~\ref{l.two-scale}, it suffices to show that, for the vector field $\mathbf F$ defined in \eqref{e.def.Fi},
\begin{equation}
\label{e.est.F}
\|\mathbf F\|_{\un L^2(U_r)} \le \O_s  \Ll( C \ell(\lambda)\|\bar v\|_{\un H^{2}(U_r)} +  C \lambda^{\frac d 2} \, \|\bar v\|_{\un H^{1}(U_r)} \Rr).
\end{equation}
We estimate each of the terms appearing in the definition of $\mathbf F$. 
By Proposition~\ref{p.corrector} and \eqref{e.Osums}, we have, for every $i,j,k \in \{1,\ldots,d\}$,
\begin{equation*}
\Ll\| \Ll(\a_{ij} \phil -  \Sl_{e_k,ij}  \Rr)  \partial_{x_j} \dr_{x_k}  \bar v \Rr\|_{\un L^2(U_r)} \le \O_s \Ll( C \ell(\lambda) \|\bar v\|_{\un H^2(U_r)} \Rr) ,
\end{equation*}
as well as
\begin{equation*}  
\Ll\|\Ll(\a_{ij}\Ll(\dr_{x_j} \phi_{e_k} \ast \Phi_{\lambda^{-1}}\Rr)   + \dr_{x_j} \S_{e,ij} \ast \Phi_{\lambda^{-1}}\Rr)\dr_{x_k} \bar v\Rr\|_{\un L^2(U_r)} \le \O_s \Ll(C  \lambda^\frac d 2 \| \bar v\|_{\un H^{1}(U_r)} \Rr),
\end{equation*}
and thus \eqref{e.est.F} follows.

\smallskip

\emph{Step 2.}
In order to show \eqref{e.error.estimates}, we first need to evaluate the contribution of a boundary layer. 
For every $\ell \ge 0$, we write $\zeta_\ell := \ell^{-d} \zeta(\ell^{-1} \, \cdot\,)$ (recall the definition of $\zeta$ in \eqref{e.def.zeta}) and
\begin{equation}  
\label{e.def.url}
U_{r,\ell} := \Ll\{x \in U_r \ : \ \dist(x,\partial U_r) > \ell\Rr\}.
\end{equation}
With the definition of $\ell(\lambda)$ given in \eqref{e.def.llambda}, we set 
\begin{equation*}  
T := \Ll(\1_{\Rd \setminus U_{r,2\ell(\lambda)}}\ast \zeta_{\ell(\lambda)}\Rr) \sum_{k = 1}^d \phil \, \partial_{x_k} \bar v .
\end{equation*}
We will use the function $T$ as a test function for an upper bound on the size of the actual boundary layer in the next step. In this step, we show that there exists $C(s,U,\Lambda,\alpha,d) < \infty$ such that
\begin{equation}
\label{e.estim.nablaT}
\|\nabla T \|_{\un L^2(U_r)} \le \O_s \Ll( C \,\ell(\lambda)^\frac 1 2 \,\| \bar v \|_{\un H^1(U_r)}^\frac 1 2 \, \|\bar v\|_{\un H^2(U_r)}^\frac 1 2 + C \ell(\lambda) \|\bar v\|_{\un H^2(U_r)} \Rr) 
\end{equation}
and
\begin{equation}
\label{e.estim.T}
\|T\|_{\un L^2(U_r)} \le \O_s \Ll( C \, \ell(\lambda)^\frac 3 2  \,\| \bar v \|_{\un H^1(U_r)}^\frac 1 2 \, \|\bar v\|_{\un H^2(U_r)}^\frac 1 2  \Rr) .
\end{equation}

By the chain rule, 
\begin{equation*}  
\|\nabla T\|_{L^2(U_r)} 
\le C \sum_{k = 1}^d \Ll\| \Ll(\frac{ |\nabla \bar v| }{\ell(\lambda)} + |\nabla^2\bar v| \Rr) \, \Ll|\phil\Rr| + \Ll| \nabla \bar v  \Rr| \, \Ll|\nabla \phil\Rr| \Rr\|_{L^2(U_r\setminus U_{r,3\ell(\lambda)})}. 
\end{equation*}
By Proposition~\ref{p.corrector} and \eqref{e.Osums}, we have
\begin{equation*}  
\Ll\| |\nabla^2\bar v |  \, \Ll| \phil \Rr|  \Rr\|_{L^2(U_r \setminus U_{r,3\ell(\lambda)})} 
\le \O_s \Ll( C \ell(\lambda) \|\nabla^2 \bar v\|_{L^2(U_r)}  \Rr) .
\end{equation*}
Similarly,
\begin{equation}  
\label{e.for.estim.T}
\Ll\|  \frac{ |\nabla \bar v | }{\ell(\lambda)} \, \Ll| \phil \Rr| \Rr\|_{L^2(U_r \setminus U_{r,3\ell(\lambda)})} \le \O_s \Ll( C \|\nabla  \bar v\|_{L^2(U_r \setminus U_{r,3\ell(\lambda)})} \Rr) ,
\end{equation}
and by Proposition~\ref{p.trace},
\begin{equation}  
\label{e.est.boundary.barv}
\|\nabla  \bar v \|_{L^2(U_r \setminus U_{r,3\ell(\lambda)})} \le C \, \ell(\lambda)^\frac 1 2 \, r^{\frac d 2} \, \|\bar v\|_{\un H^1(U_r)}^\frac 1 2 \, \|\bar v\|_{\un H^2(U_r)}^\frac 1 2.
\end{equation}
Finally, using again Proposition~\ref{p.corrector} and \eqref{e.Osums}, we have
\begin{equation*}  
\Ll\| |\nabla \bar v | \, \Ll| \nabla \phil \Rr|  \Rr\|_{L^2(U_r \setminus U_{r,3\ell(\lambda)})} \le \O_s \Ll( C \|\nabla \bar v\|_{L^2(U_r \setminus U_{r,3\ell(\lambda)})} \Rr)  ,
\end{equation*}
and we can appeal once more to \eqref{e.est.boundary.barv} to estimate the norm of $\nabla \bar v$ on the right side above. This completes the proof of \eqref{e.estim.nablaT}. The estimate \eqref{e.estim.T} follows from \eqref{e.for.estim.T} and \eqref{e.est.boundary.barv}.

\smallskip

\emph{Step 3.}
We now evaluate the size of the boundary layer $b \in H^1(U_r)$ defined as the solution of
\begin{equation}  
\label{e.eq.b}
\Ll\{
\begin{aligned}  
& \Ll(\mu^2 - \nabla \cdot \a \nabla\Rr) b   = 0 & \quad \text{in } & U_r, \\
& b  = \sum_{k = 1}^d \phil \, \partial_{x_k} \bar v & \quad \text{on } & \partial U_r.
\end{aligned}
\Rr.
\end{equation}
Since $T$ and $b$ share the same boundary condition on $\partial U_r$, by the variational formulation of \eqref{e.eq.b}, we have
\begin{equation*}  
\int_{U_r} \Ll( \mu^2 b^2 + \nabla b \cdot \a \nabla b \Rr) \le \int_{U_r} \Ll( \mu^2 T^2 + \nabla T \cdot \a \nabla T \Rr).
\end{equation*}
By the result of the previous step, we thus obtain, for every $\mu \in [0,\lambda]$,
\begin{multline}  
\label{e.estim.b}
\mu \|b\|_{\un L^2(U_r)} + \|\nabla b\|_{\un L^2(U_r)} 
\\
\le \O_s \Ll( C \,\ell(\lambda)^\frac 1 2 \,\| \bar v \|_{\un H^1(U_r)}^\frac 1 2 \, \|\bar v\|_{\un H^2(U_r)}^\frac 1 2 + C \ell(\lambda) \|\bar v\|_{\un H^2(U_r)} \Rr) . 
\end{multline}

\smallskip

\emph{Step 4.}
We are now prepared to prove that
\begin{multline} 
\label{e.announce.X1}
\Ll\| \nabla (v - w) \Rr\|_{\un L^2(U_r)} + \mu  \|v-w\|_{\un L^2(U_r)} 
\\
\le  \O_s  \Ll( C  \Ll(\mu \ell(\lambda) + \lambda^\frac d 2\Rr) \|\bar v\|_{\un H^1(U_r)} +   C \ell(\lambda)^\frac 1 2 \,\| \bar v \|_{\un H^1(U_r)}^\frac 1 2 \, \|\bar v\|_{\un H^2(U_r)}^\frac 1 2 +  C \ell(\lambda) \|\bar v\|_{\un H^2(U_r)} \Rr).
\end{multline}
For concision, we define
\begin{equation*}  
\X_1 := \|-\nabla \cdot (\ahom \nabla \bar v - \a \nabla w)\|_{\un H^{-1}(U_r)},
\end{equation*}
and recall that, by \eqref{e.twoscale},
\begin{equation}
\label{e.estim.X1}
\X_1 \le \O_s \Ll( C \ell(\lambda) \|\bar v\|_{\un H^2(U_r)} + C \lambda^\frac d 2 \|\bar v\|_{\un H^1(U_r)} \Rr) .
\end{equation}
Moreover, by \eqref{e.eq.v.barv} and \eqref{e.eq.b},
\begin{align*}  
-\nabla \cdot (\ahom \nabla \bar v - \a \nabla w) 
&
= -\nabla \cdot \a \nabla (v-w) + \mu^2 (v-\bar v)  \\
& = -\nabla \cdot \a \nabla (v-w+b) + \mu^2(v-\bar v+b)   .
\end{align*}
Since $v-w+b \in H^1_0(U_r)$, we deduce that
\begin{multline*}  
|U_r|^{-1} \int_{U_r} \Ll(\nabla (v-w+b) \cdot \a \nabla (v-w+b) + \mu^2 (v-w+b)(v-\bar v+b) \Rr) 
\\
\le \X_1 \|\nabla(v-w+b)\|_{\un L^2(U_r)} ,
\end{multline*}
and by the uniform ellipticity of $\a$ and H\"older's inequality,
\begin{multline*}  
\|\nabla (v-w+b)\|_{\un L^2(U_r)}^2 + \mu^2 \|v-w+b\|_{\un L^2(U_r)}^2
\\
\le C \X_1 \|\nabla (v-w+b)\|_{\un L^2(U_r)} + \mu^2 \|w-\bar v\|_{\un L^2(U_r)} \, \|v-w+b\|_{\un L^2(U_r)}.
\end{multline*}
Using Proposition~\ref{p.corrector} and \eqref{e.Osums}, we verify that
\begin{equation}  
\label{e.est.w-barv}
\|w - \bar v\|_{\un L^2(U_r)} \le \O_s \Ll( C \ell(\lambda) \|\bar v\|_{\un H^1(U_r)} \Rr) .
\end{equation}
Combining these two estimates with \eqref{e.estim.X1} and Young's inequality, we obtain that
\begin{multline*}  
\|\nabla (v-w+b)\|_{\un L^2(U_r)} + \mu \|v-w+b\|_{\un L^2(U_r)} \\
\le \O_s \Ll(C \Ll(\mu \ell(\lambda) + \lambda^\frac d 2\Rr) \|\bar v\|_{\un H^1(U_r)}   + C \ell(\lambda) \|\bar v\|_{\un H^2(U_r)} \Rr) .
\end{multline*}
An application of \eqref{e.estim.b} then yields the announced estimate \eqref{e.announce.X1}.

\smallskip

\emph{Step 5.}
In this step, we complete the proof of the fact that 
$\|v-w\|_{\un H^1(U_r)}$
is bounded by the right side of \eqref{e.error.estimates}. In view of \eqref{e.announce.X1}, it suffices to show that
\begin{multline}  
\label{e.basic.l2}
r^{-1} \|v-w\|_{\un L^2(U_r)} \\
\le \O_s  \Ll( C \Ll(\mu \ell(\lambda) + \lambda^\frac d 2\Rr) \|\bar v\|_{\un H^1(U_r)} +  C \ell(\lambda)^\frac 1 2 \,\| \bar v \|_{\un H^1(U_r)}^\frac 1 2 \, \|\bar v\|_{\un H^2(U_r)}^\frac 1 2 +  C \ell(\lambda) \|\bar v\|_{\un H^2(U_r)} \Rr).
\end{multline}
By \eqref{e.estim.nablaT} and \eqref{e.announce.X1}, we have
\begin{multline*}  
\|\nabla (v-w+T)\|_{\un L^2(U_r)} \\
\le \O_s  \Ll( C \Ll(\mu \ell(\lambda) + \lambda^\frac d 2\Rr)\|\bar v\|_{\un H^1(U_r)} +  C \ell(\lambda)^\frac 1 2 \,\| \bar v \|_{\un H^1(U_r)}^\frac 1 2 \, \|\bar v\|_{\un H^2(U_r)}^\frac 1 2 +  C \ell(\lambda) \|\bar v\|_{\un H^2(U_r)} \Rr).
\end{multline*}
The estimate \eqref{e.basic.l2} then follows by the Poincar\'e inequality and \eqref{e.estim.T}. 

\smallskip

\emph{Step 6.}
We now complete the proof that $\Ll( \mu + r^{-1} \Rr) \|v-\bar v\|_{\un L^2(U_r)}$ is bounded by the right side of \eqref{e.error.estimates}. 
For $\mu \ge r^{-1}$, the result follows from \eqref{e.announce.X1} and \eqref{e.est.w-barv}, while $\mu \le r^{-1}$, it follows from \eqref{e.error.estimates} and \eqref{e.est.w-barv}. 

\smallskip

\emph{Step 7.}
We finally complete the proof of \eqref{e.error.estimates} by showing the estimate for the $H^{-1}$ norm of $v - \bar v$. 
If $\mu \le r^{-1}$, then the conclusion is immediate from the estimate on the $L^2$ norm of $v-\bar v$, by scaling. Otherwise, by the equations for $v$ and $\bar v$, we have 
\begin{equation*}  
\mu^2(v- \bar v) = \nabla \cdot \Ll( \a \nabla v - \ahom \nabla \bar v \Rr),
\end{equation*}
and moreover,
\begin{align*}  
& \Ll\| \nabla \cdot \Ll( \a \nabla v - \ahom \nabla \bar v\Rr)   \Rr\|_{\un H^{-1}(U_r)} 
\\
& \qquad \le 
\Ll\| \nabla \cdot \Ll( \a \nabla w - \ahom \nabla \bar v\Rr)   \Rr\|_{\un H^{-1}(U_r)} + \Ll\| \nabla \cdot \Ll( \a  \nabla v - \a \nabla w\Rr)   \Rr\|_{\un H^{-1}(U_r)} 
\\
& \qquad \le 
\Ll\| \nabla \cdot \Ll( \a \nabla w - \ahom \nabla \bar v\Rr)   \Rr\|_{\un H^{-1}(U_r)} + C \Ll\| \nabla v -  \nabla w \Rr\|_{\un L^2(U_r)}.
\end{align*}
The terms on the right side above have been estimated in \eqref{e.twoscale} and \eqref{e.announce.X1} respectively,  so the proof is complete.
\end{proof}

We next give the proof of the main result. 

\begin{proof}[Proof of Theorem~\ref{t.contract}]
Let $u,v,u_0,\bar u, \tilde u \in H^1(U_r)$ be as in the statement of Theorem~\ref{t.contract}. We first show the a priori estimates
\begin{equation}  
\label{e.apriori.u0}
\lambda \|u_0\|_{\un L^2(U_r)} + \|\nabla u_0\|_{\un L^2(U_r)} \le C \|u-v\|_{\un H^1(U_r)} ,
\end{equation}
and
\begin{equation}
\label{e.apriori.baru}
 \|\bar u\|_{\un H^1(U_r)} + \lambda^{-1} \|\bar u\|_{\un H^2(U_r)} \le C \|u-v\|_{\un H^1(U_r)} .
\end{equation}
By the variational formulation of the equation for $u_0\in H^1_0(U_r)$, we have
\begin{equation*}  
\int_{U_r} \Ll( \lambda^2 u_0^2 + \nabla u_0 \cdot \a \nabla u_0 \Rr) = \int_{U_r} \nabla u_0 \cdot \a \nabla (u-v).
\end{equation*}
By H\"older's and Young's inequalities and the uniform ellipticity of $\a$, we get~\eqref{e.apriori.u0}.
Using the equation \eqref{e.alt.eq.baru} satisfied by $\bar u \in H^1_0(U_r)$ and the estimate \eqref{e.apriori.u0}, we deduce
\begin{align*}  
\|\nabla \bar u\|_{\un L^2(U_r)} & \le C \|\nabla (u-v-u_0)\|_{\un L^2(U_r)} \\
& \le C \|\nabla (u-v)\|_{\un L^2(U_r)}.
\end{align*}
By Proposition~\ref{p.h2.estim} and the $L^2$ estimate in \eqref{e.apriori.u0},
we also have
\begin{equation*}  
\|\bar u\|_{\un H^2(U_r)} \le C \lambda^2 \|u_0\|_{\un L^2(U_r)} \le C \lambda \|u-v\|_{\un H^1(U_r)},
\end{equation*}
as announced in \eqref{e.apriori.baru}.

\smallskip

We now introduce the two-scale expansion
\begin{equation*}  
w := \bar u  + \sum_{k = 1}^d \phil \, \partial_{x_k} \bar u.
\end{equation*}
Using the equation for $\bar u$ in \eqref{e.alt.eq.baru} and Theorem~\ref{t.twoscale} with $\mu = 0$, we obtain
\begin{multline*}  
\|v + u_0 + w - u\|_{\un H^1(U_r)} \\
\le \O_s \Ll( C \lambda^{\frac d 2} \|\bar u\|_{\un H^1(U_r)} + C \ell(\lambda)^\frac 1 2 \|\bar u\|_{\un H^1(U_r)}^\frac 1 2 \, \|\bar u\|_{\un H^2(U_r)}^\frac 1 2 +  C \ell(\lambda) \|\bar u\|_{\un H^2(U_r)}  \Rr) ,
\end{multline*}
and thus, by \eqref{e.apriori.baru},
\begin{equation}  
\label{e.h-1.expansion}
\|v + u_0  + w - u  \|_{\un H^1(U_r)} \le \O_s \Ll( C \ell(\lambda)^\frac 1 2 \lambda^\frac 1 2 \|u-v\|_{\un L^2(U_r)} \Rr) .
\end{equation}
In order to complete the proof of Theorem~\ref{t.contract}, there remains to estimate the $H^1$ norm of $w- \tilde u$. By the equation for $\tilde u$, Theorem~\ref{t.twoscale} and \eqref{e.apriori.baru}, we have
\begin{align*}  
& \Ll\|  w - \tilde u  \Rr\|_{\un H^1(U_r)} 
\\
& \qquad \le \O_s  \bigg( C \Ll(\lambda \ell(\lambda) + \lambda^\frac d 2\Rr) \|\bar u\|_{\un H^1(U_r)} \\
& \qquad \qquad \qquad \qquad \qquad +  C \ell(\lambda)^\frac 1 2 \,\| \bar u \|_{\un H^1(U_r)}^\frac 1 2 \, \|\bar u\|_{\un H^2(U_r)}^\frac 1 2 +  C \ell(\lambda) \|\bar u\|_{\un H^2(U_r)} \bigg) 
\\
& \qquad \le \O_s \Ll( C \ell(\lambda)^\frac 1 2 \lambda^\frac 1 2 \|u-v\|_{\un H^1(U_r)} \Rr) ,
\end{align*}
as desired.
\end{proof}

%
%
%
%
%
%
\section{Numerical results}
\label{s.numerics}

In this section, we report on numerical tests demonstrating the performance of the iterative method described in Theorem~\ref{t.contract}. The code used in the tests can be consulted at
\begin{equation}
\label{e.github}
\mbox{\small{\url{https://github.com/ahannuka/homo_mg}}}
\end{equation}
Throughout this section, we consider a two-dimensional {\em random checkerboard} coefficient field $x \mapsto \a(x)$, which is defined as follows: we give ourselves a family $(b(z))_{z \in \Z^2}$ of independent random variables such that for every $z \in \Z^2$,
\begin{equation*}  
\P \Ll[ b(z) = 1 \Rr]  = \P \Ll[ b(z) = 9 \Rr]  = \frac 1 2.
\end{equation*}
We then set, for every $x \in z + [0,1)^2$,%
\begin{equation*}
\a(x) := b(z) \, I_2,
\end{equation*}
where $I_2$ denotes the $2$-by-$2$ identity matrix. For this particular coefficient field, the homogenized matrix can be computed analytically as $\ahom = 3 I_2$ (see \cite[Exercise~2.3]{AKMbook}). When such an analytical expression does not exist, the homogenized coefficient can be approximated numerically, for example, by using the method presented in \cite{efficient}.
\begin{figure}[ht]
\includegraphics[scale=0.7]{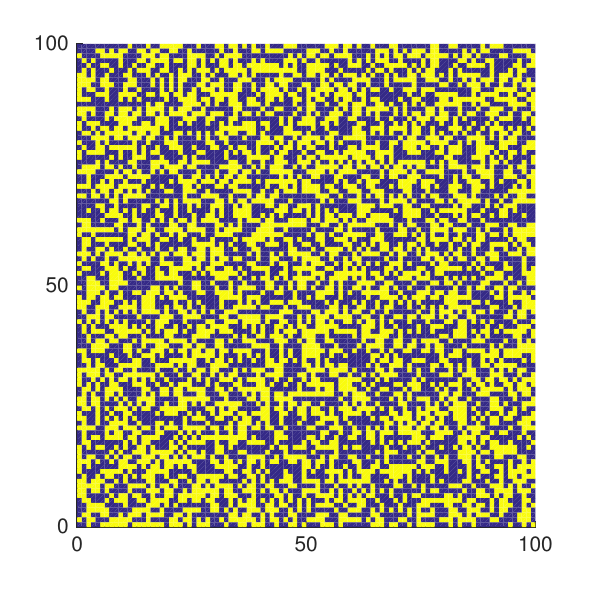}
\includegraphics[scale=0.35]{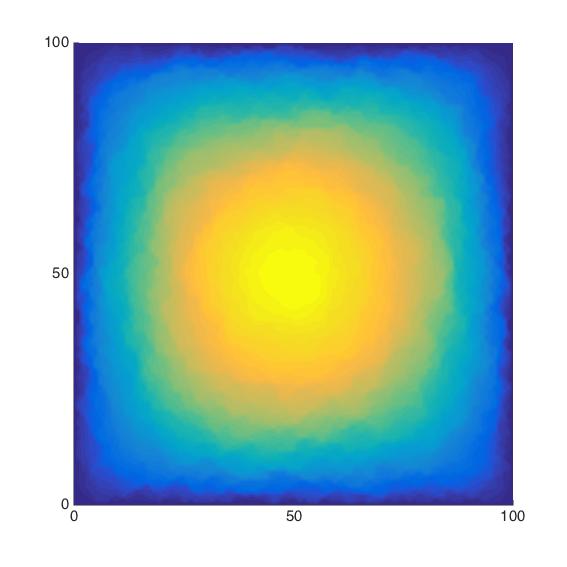}
\caption{On the left, a typical realization of the coefficient field $\a(x)$, with $r=100$ (yellow coresponds to the value $1$ and blue to the value $9$). On the right, the corresponding solution.}
\label{fig:sol_cof}
\end{figure}
\begin{figure}[ht]
\begin{center}
\includegraphics[scale=0.7]{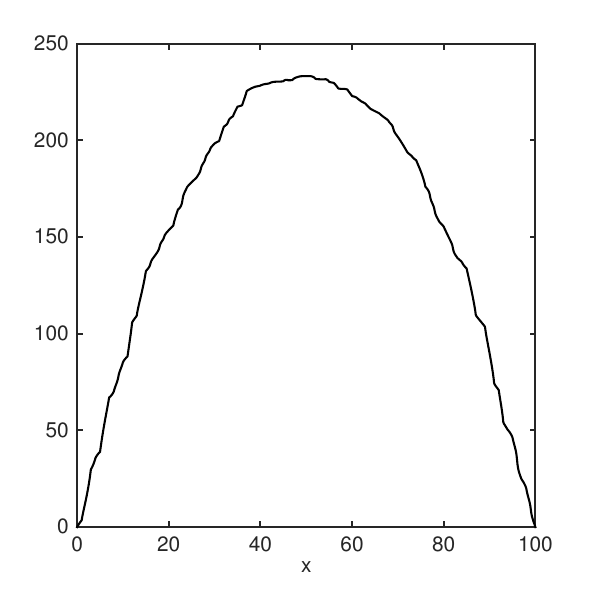}
\includegraphics[scale=0.7]{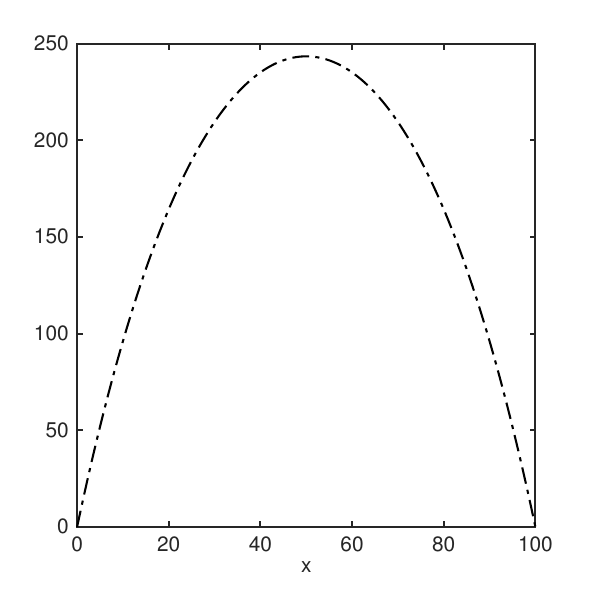}
\end{center}
\caption{On the left, the FE-solution to the heterogeneous problem, and on the right, the FE-solution to the corresponding homogenized problem. Both solutions are plotted along the line $y=55$. The fast oscillation in the left figure is clearly visible.}
\label{fig:sol_y55}
\end{figure}

For each $r > 0$, we write $U_r := (0,r)^2$. We aim to compute the solution to the continuous partial differential equation in \eqref{e.model.problem} with $w = 0$ (null Dirichlet boundary condition) and load function $f = 1$. We discretize this problem using a first-order finite element method. Let  $\mathcal{T}$ be a triangular mesh of the domain $U_r$ constructed by first dividing each cell~$z + [0,1)^2$ ($z \in \Z^2$) into two triangles, and then using three levels of uniform mesh refinement. This results into a sufficiently fine mesh to capture the oscillations present in the exact solution $u$.  The first order finite element space 
\begin{equation*}
V_h := \Ll\{ u \in H^1_0(U_r) \; | \; u_{|K} \in P^1(K) \; \quad \forall K \in \mathcal{T}  \Rr\}
\end{equation*}
with standard nodal basis is used in all computations. The finite element solution $u_h \in V_h$ satisfies 
\begin{equation}
\label{eq:fem}
\forall v_h \in V_h, \quad ( \a \nabla u_h, \nabla v_h) = (f,v_h) . 
\end{equation}

\begin{figure}[ht]
\begin{center}
\includegraphics[scale=0.7]{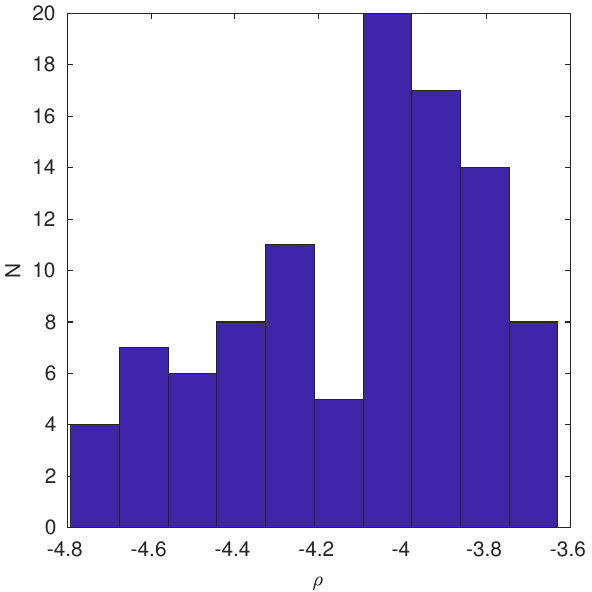} 
\includegraphics[scale=0.7]{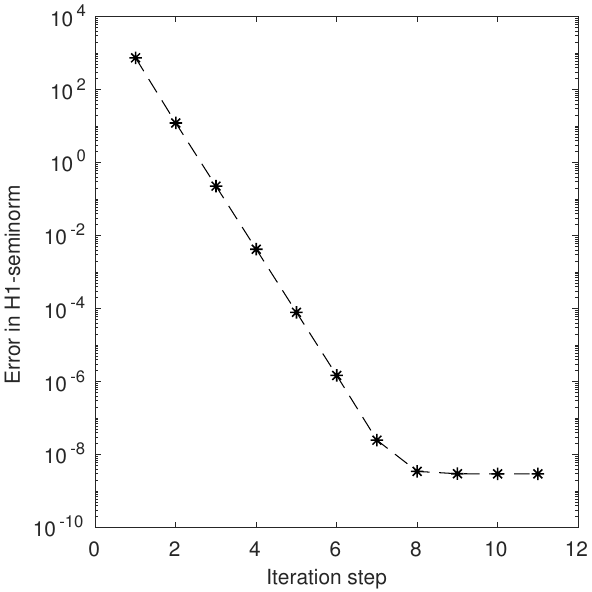}
\end{center}
\caption{On the left, the empirical distribution of the factor $\rho$ for~$\lambda = 0.1$ and $r=100$, based on $100$ runs. On the right, the error in the $H^1$ seminorm for $r=100$ and~$\lambda = 0.1$, after each iteration. The method converges after $8$ iterations.} 
\label{fig:cdist}
\end{figure}

A typical realization of the coefficient field $\a(x)$ and of the corresponding exact solution $u$ are visualized in Figure \ref{fig:sol_cof}, with the choice of $r = 100$. The high-frequency oscillations in the solution are clearly visible in Figure \ref{fig:sol_y55}, where the solution is visualized along the line $y=55$. 

\smallskip

Our interest lies in the contraction factor of the iterative procedure. The contraction factor is studied by first solving the finite dimensional problem (\ref{eq:fem}) exactly using a direct solver. Then a sequence of approximate solutions $\{ u_h^{(i)} \}_{i=1}^N$ is generated by starting from $u_h^{(1)} = 0$ and applying the iterative procedure described in Theorem~\ref{t.contract}. The logarithm of the error $\|\nabla( u - u_h^{(i)}) \|_{L^2(U_r)}$ is computed for each $i \in \{1,\ldots,10\}$, a regression line is fitted, and the slope of this line is denoted by~$\rho$. It is our numerical estimate of the logarithm of the contraction factor; roughly speaking,
\begin{equation*}
\rho \approx \log{ \left( \frac{  \| \nabla (u_h - u_h^{(i+1)} )\|_{L^2(U_r)}}{ \| \nabla (u_h - u_h^{(i)} ) \|_{L^2(U_r)}} \right)}
\end{equation*}
(``$\log$'' denotes the natural logarithm.)
The iteration is said to converge when the relative error is smaller than $10^{-9}$. Past this threshold, the error between the exact and the iterative solutions is smaller than the accuracy of the discretization itself, and thus cannot be measured. 

\smallskip

Since the coefficient field is random, the contraction factor will vary for different realizations of $\a$. For the choice of $\lambda = 0.1$ and $r = 100$, the empirical distribution of the contraction factor is given in Figure \ref{fig:cdist}, based on one hundered samples of the coefficient field. Appart from the purposes of displaying this histogram, each of our estimates for $\rho$ is an average over ten realizations of the coefficient field. 

\smallskip

In our first test, the parameter $\lambda$~is fixed to $\lambda = 0.1,0.2$, and then $0.4$. The size of the domain $r$ is varied between $10$ and $200$. The averaged contraction factor is visualized on the left side of Figure \ref{fig:rho_fig}. The results are in excellent agreement with Theorem~\ref{t.contract}. After a pre-asymptotic region, the contraction factor becomes independent of the size of the domain $r$. The pre-asymptotic region is due to the fact that for small values of $r$, the pre- and post-smoothing steps are essentially sufficient to solve the equation. We emphasize that the contraction factor remains very good, of the order of $0.1$, even for the relatively large value of $\lambda = 0.4$. 

\smallskip

In the second test, the size of the domain $r$ takes values $r = 100$, $200$, and~$300$, while $\lambda$~is varied between~$0.01$~and~$0.5$. For each~$\lambda$, the exponent of the averaged contraction factor is computed based on ten simulation runs. The results are presented on the right side of Figure \ref{fig:rho_fig}. After a pre-asymptotic region, the exponential of the contraction factor behaves like $\lambda^{1/2}$, as predicted by Theorem~\ref{t.contract}. The pre-asymptotic region is roughly characterized by the scaling $r \lesssim 10 \lambda^{-1}$.
\begin{figure}[ht]
\begin{center}
\includegraphics[scale=0.7]{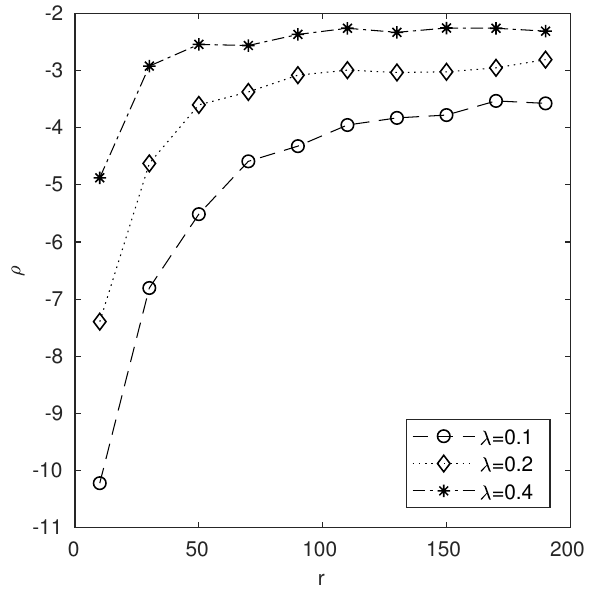}\hfill
\includegraphics[scale=0.7]{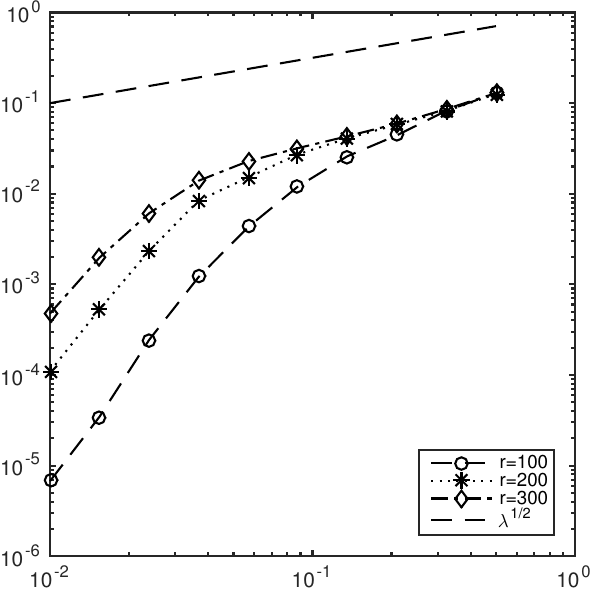}
\end{center}
\caption{On the left, averaged factor $\rho$ as a function of $r$, for~$\lambda = 0.1$, $0.2$, and~$0.4$. On the right, the exponential of the averaged factor $\rho$ as a function of $\lambda$ for $r=100$, $200$, and $300$. In all cases, the average is computed from ten simulation runs.}
\label{fig:rho_fig}
\end{figure}

%

%
%
%
%
%
%

\appendix

\section{Sobolev estimates}
\label{s.append}

In this appendix, we prove an estimate for the norm of a function restricted to a layer close to the boundary of a domain. The estimate is an integrated version of a trace theorem. For convenience, we will also recall a standard $H^2$ estimate for homogeneous elliptic equations. As in \eqref{e.def.url}, for every $\ell \ge 0$, we write
\begin{equation*}  
U_{r,\ell} := \Ll\{x \in U_r \ : \ \dist(x,\partial U_r) > \ell\Rr\}.
\end{equation*}
\begin{proposition}[Trace estimate]
\label{p.trace}
There exists $C(U,d) < \infty$ such that for every $r \ge 1$, $\ell \in \Ll(0,r\Rr]$ and $f \in H^1 (U_r)$,
\begin{equation*}  
r^{-d} \, \|f\|_{L^2(U_r \setminus U_{r,\ell})}^2 \le C \, \ell  \, \|f\|_{\un L^2(U_r)} \, \|f \|_{\un H^1(U_r)}.
\end{equation*}
\end{proposition}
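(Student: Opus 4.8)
The plan is to prove this integrated trace estimate by reducing to the model case of a half-space (or a flat boundary) via the $C^{1,1}$ structure of $U$, and then using a one-dimensional computation combined with the fundamental theorem of calculus. First I would observe that by scaling (replacing $f(x)$ by $f(rx)$ and using $U_r = rU$) it suffices to treat the case $r = 1$, at which point the claimed inequality reads $\|f\|_{L^2(U \setminus U_{1,\ell/r})}^2 \le C (\ell/r) \|f\|_{L^2(U)} \|f\|_{H^1(U)}$ for $\ell/r \in (0,1]$; so without loss of generality I may assume $r=1$ and prove $\|f\|_{L^2(U \setminus U_{1,\delta})}^2 \le C\delta \|f\|_{L^2(U)}\|f\|_{H^1(U)}$ for all $\delta \in (0,1]$. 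By density of smooth functions in $H^1(U)$, it is enough to prove this for $f \in C^\infty(\overline U)$.

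Next I would set up the boundary chart machinery. Because $\partial U$ is $C^{1,1}$ and compact, I can cover a neighborhood of $\partial U$ by finitely many open sets $V_1,\dots,V_M$ on each of which, after a rotation and translation, $U \cap V_m$ is the region lying below a $C^{1,1}$ graph $x_d < g_m(x')$, with $|\nabla g_m|$ bounded. The layer $U \setminus U_{1,\delta}$ (for $\delta$ small, say $\delta \le \delta_0(U)$; for $\delta \in (\delta_0,1]$ the estimate is trivial since the left side is at most $\|f\|_{L^2(U)}^2 \le \delta_0^{-1}\delta \|f\|_{L^2(U)}^2$) is contained in $\bigcup_m V_m$, and moreover a point of $U \cap V_m$ within distance $\delta$ of $\partial U$ satisfies $x_d > g_m(x') - C\delta$ in the corresponding chart (comparing distance-to-boundary with distance-in-the-$x_d$-direction costs only a constant because the graph is Lipschitz). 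So it suffices to bound, for each fixed chart with $f$ supported (via a partition of unity subordinate to the $V_m$) in that chart,
\begin{equation*}
\int_{x' } \int_{g(x') - C\delta}^{g(x')} |f(x', x_d)|^2 \, dx_d \, dx'.
\end{equation*}

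The heart of the matter is then a one-dimensional estimate: for a smooth function $h$ on an interval and any $\tau > 0$,
\begin{equation*}
\int_{a}^{a+\tau} |h(t)|^2 \, dt \le 2\tau \|h\|_{L^2(a,b)} \|h\|_{H^1(a,b)}
\end{equation*}
(or a similar clean form), which one proves by writing $|h(t)|^2 = |h(s)|^2 + \int_s^t \frac{d}{d\sigma}|h(\sigma)|^2\, d\sigma = |h(s)|^2 + 2\int_s^t h h'$, averaging over $s$ in an interval of length comparable to the size of the domain in that direction, and applying Cauchy--Schwarz. Applying this in the $x_d$-variable with $\tau \simeq \delta$, integrating the resulting bound over $x'$, and using Cauchy--Schwarz once more in $x'$ to recombine into full $L^2(U)$ and $H^1(U)$ norms, gives the claim in each chart; summing over the finitely many charts and undoing the scaling finishes the proof. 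I expect the main obstacle to be purely bookkeeping rather than conceptual: carefully relating $\dist(x,\partial U)$ to the vertical distance $g(x') - x_d$ in the boundary charts with a constant depending only on $U$, and making sure the partition of unity does not spoil the product structure of the norms (which it doesn't, since multiplying by a fixed smooth cutoff changes $\|f\|_{H^1}$ by at most a constant factor). There is also a minor point that for $\delta$ not small one must argue trivially, as noted above, so that the geometric chart argument is only needed for $\delta \le \delta_0(U)$.
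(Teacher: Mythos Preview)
Your argument is correct, but it differs from the paper's route. The paper avoids boundary charts entirely: it fixes $t\in(0,\ell)$, extends the outward unit normal $\mathbf n_{r,t}$ of $\partial U_{r,t}$ to a vector field on $U_{r,t}$ with $\|\nabla\mathbf n_{r,t}\|_{L^\infty}\le C r^{-1}$ (this is where the $C^{1,1}$ regularity of $U$ is used), applies the divergence theorem to $f^2\mathbf n_{r,t}$ to get
\[
\int_{\partial U_{r,t}} f^2 = \int_{U_{r,t}} \nabla\cdot(f^2\,\mathbf n_{r,t}) \le C r^{-1}\|f\|_{L^2(U_r)}^2 + C\|f\|_{L^2(U_r)}\|\nabla f\|_{L^2(U_r)},
\]
and then integrates this uniform-in-$t$ bound over $t\in(0,\ell)$ via the coarea formula. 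Your approach---scaling to $r=1$, straightening the boundary in charts, and using the one-dimensional identity $|h(t)|^2=|h(s)|^2+2\int_s^t hh'$---is a perfectly standard and valid alternative; it is more elementary in that it never needs a global extension of the normal field or the coarea formula, but it carries more bookkeeping (partition of unity, comparing $\dist(\cdot,\partial U)$ with vertical distance in each chart). The paper's proof is shorter precisely because the divergence theorem and coarea formula package all of that geometry into two lines.
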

\begin{proof}
Denote by $\mathbf n_{r,t}$ the unit normal vector to $\partial U_{r,\ell}$, which we extend to $U_{r,\ell}$ harmonic continuation. Since $U$ is $C^{1,1}$, there exists $C(U,d) < \infty$ such that for every $t \in (0,r/C]$, we have $\|\nabla \mathbf n_{r,t}\|_{L^\infty(U_{r,t})} \le Cr^{-1}$. It thus follows that
\begin{align*}  
\int_{\partial U_{r,t}} f^2 
&
= \int_{U_{r,t}} \nabla \cdot (f^2 \, \mathbf{n}_{r,t}) 
\\
& 
\le C r^{-1} \|f\|^2_{L^2(U_r)} + C \|f\|_{L^2(U_r)} \|\nabla f \|_{L^2(U_r)}
\\
&
\le C r^d \|f\|_{\un L^2(U_r)} \, \|f\|_{\un H^1(U_r)}.
\end{align*}
By the coarea formula, for every $\ell \le r /C$, we have
\begin{equation*}  
\|f\|_{L^2(U_r \setminus U_{r,\ell})}^2 = \int_0^\ell \|f\|^2_{L^2(\partial U_{r,t})} \, dt.
\end{equation*}
Combining the previous two displays yields
\begin{equation*}  
\|f\|_{L^2(U_r \setminus U_{r,\ell})}^2 \le C \ell \, r^d \, \|f\|_{\un L^2(U_r)} \, \|f\|_{\un H^1(U_r)},
\end{equation*}
which is the announced result. The case $\ell > r/C$ is immediate.
\end{proof}
\begin{proposition}[$H^2$ estimate] 
\label{p.h2.estim}
Let $\ahom \in \R^{d\times d}_{\mathrm{sym}}$ satisfy \eqref{e.ahom.ell}. There exists a constant $C(\Lambda, U,d) < \infty$ such that for every $u \in H^1_0(U_r)$ and $f \in L^2(U_r)$, if 
\begin{equation*}  
-\nabla \cdot \ahom \nabla u = f,
\end{equation*}
then $u \in H^2(U_r)$ and
\begin{equation*}  
\| u\|_{\un H^2(U_r)} \le C \|f\|_{\un L^2(U_r)}.
\end{equation*}
\end{proposition}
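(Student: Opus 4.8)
The statement is the standard interior-plus-boundary $H^2$ regularity estimate for a constant-coefficient, uniformly elliptic operator on a bounded $C^{1,1}$ domain, rescaled to the domain $U_r = rU$. The plan is to reduce it to the classical $H^2$ estimate on the fixed domain $U$ by scaling, and to sketch how the classical estimate is obtained (difference-quotient method) for the reader's convenience.

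\textbf{Step 1: Reduction to the unit-scale domain.} Given $u \in H^1_0(U_r)$ solving $-\nabla\cdot\ahom\nabla u = f$ with $f \in L^2(U_r)$, define $\tilde u(y) := u(ry)$ and $\tilde f(y) := r^2 f(ry)$ for $y \in U$. Then $\tilde u \in H^1_0(U)$ and $-\nabla\cdot\ahom\nabla\tilde u = \tilde f$ in $U$ (in the weak sense). Tracking how the scaled norms $\|\cdot\|_{\un H^k(U_r)}$ transform under $x = ry$, one checks that $\|\tilde u\|_{\un H^2(U)} = \|u\|_{\un H^2(U_r)}$ and $\|\tilde f\|_{\un L^2(U)} = \|f\|_{\un L^2(U_r)}$ — indeed, the scaled Sobolev norms were precisely designed so that each derivative of order $|\beta|$ carries the compensating factor $|V|^{(|\beta|-k)/d}$, which absorbs the powers of $r$ produced by the chain rule. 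So it suffices to prove the estimate $\|\tilde u\|_{H^2(U)} \le C(\Lambda,U,d)\|\tilde f\|_{L^2(U)}$ on the fixed domain $U$, noting that on the fixed bounded domain the scaled and unscaled norms are comparable up to constants depending only on $|U|$.

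\textbf{Step 2: The classical $H^2$ estimate on $U$.} For this I would invoke (or briefly recall) the standard global $H^2$ regularity theorem for second-order elliptic equations with constant coefficients on a $C^{1,1}$ domain; see e.g.\ Evans or Gilbarg--Trudinger. The proof proceeds by the difference-quotient method: first one obtains interior estimates by testing the equation against $-D_h^{-e_k}(\chi^2 D_h^{e_k}\tilde u)$ for a cutoff $\chi$, using ellipticity \eqref{e.ahom.ell} to bound $\|\chi D_h^{e_k}\nabla\tilde u\|_{L^2}$ uniformly in $h$, hence $\partial_{x_k}\nabla\tilde u \in L^2_{\mathrm{loc}}$; then one flattens the boundary locally via $C^{1,1}$ charts and runs the same difference-quotient argument in the $d-1$ tangential directions, recovering the pure normal second derivative from the equation itself (it equals a linear combination of the already-controlled derivatives plus $\tilde f$, divided by the nonzero normal-normal coefficient). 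Summing the local estimates with a partition of unity and using Poincaré (available since $\tilde u \in H^1_0(U)$, so $\|\tilde u\|_{H^1(U)} \le C\|\nabla\tilde u\|_{L^2(U)} \le C\|\tilde f\|_{L^2(U)}$ by the energy estimate) yields $\|\tilde u\|_{H^2(U)} \le C(\Lambda,U,d)\|\tilde f\|_{L^2(U)}$. Combining with Step 1 gives the claim.

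\textbf{Main obstacle.} There is no serious obstacle here; this is a textbook result and the only genuine content in the present context is the \emph{scale invariance}, i.e.\ verifying in Step 1 that the constant $C$ does not deteriorate as $r\to\infty$. This is exactly what the scaled norm conventions of Section~\ref{s.notation} guarantee, so the bookkeeping is routine once one writes the change of variables carefully. The one point requiring a line of care is confirming that $\tilde u$ inherits the $H^1_0$ boundary condition (immediate, since composition with the diffeomorphism $y\mapsto ry$ maps $C^\infty_c(U_r)$ onto $C^\infty_c(U)$ and is bounded on $H^1$) so that the energy estimate and Poincaré inequality apply on $U$.
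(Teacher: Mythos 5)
Your proposal is correct and is essentially the paper's approach: the paper disposes of this proposition with a one-line citation to the classical constant-coefficient $H^2$ regularity theorem on $C^{1,1}$ domains, which is exactly the result you invoke (and whose difference-quotient proof you sketch), with the scaling reduction to the unit domain made explicit. One small bookkeeping slip, harmless to the argument: with your normalization $\tilde u(y)=u(ry)$, $\tilde f(y)=r^2f(ry)$, the scaled norms satisfy $\|\tilde u\|_{\un H^2(U)} = r^2\|u\|_{\un H^2(U_r)}$ and $\|\tilde f\|_{\un L^2(U)} = r^2\|f\|_{\un L^2(U_r)}$ rather than the equalities you state; since both sides carry the same factor $r^2$, it cancels and the $r$-independent constant still transfers as claimed.
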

\begin{proof}
See \cite[Theorem~6.3.2.4]{Evans}.
\end{proof}

\subsection*{Acknowledgments} SA~was partially supported by the NSF Grant DMS-1700329. AH~was partially supported by the Stenb\"{a}ck stiftelse.  TK~was supported by the Academy of Finland. JCM was partially supported by the ANR Grant LSD (ANR-15-CE40-0020-03).

\small
\bibliographystyle{abbrv}
\bibliography{fixedpoint}

\end{document}